\documentclass[12pt]{amsart}%
\usepackage{amsmath}
\usepackage[margin=1in]{geometry}
\usepackage{graphicx}
\usepackage{amsfonts}
\usepackage{amssymb}%
\usepackage[utf8]{inputenc}
\usepackage{comment}
\usepackage{hyperref}

\newtheorem{theorem}{Theorem}[section]
\theoremstyle{plain}

\newtheorem{claim}{Claim}


\newtheorem{lemma}{Lemma}[section]

\newtheorem{proposition}{Proposition}[section]

\numberwithin{equation}{section}
\theoremstyle{definition}

\theoremstyle{remark}
\newtheorem{remark}{Remark}[section]

\begin{document}
\title[Hessian Estimates]{
Hessian Estimates for Lagrangian mean curvature equation}
\author{Arunima Bhattacharya}
\address{Department of Mathematics\\
University of Washington, Seattle, WA 98195, U.S.A.}
\email{arunimab@uw.edu}

\begin{abstract}
 In this paper, we derive a priori interior Hessian estimates for the Lagrangian mean curvature equation if the Lagrangian phase is supercritical and has bounded second derivatives.

\end{abstract}
\maketitle

\section{ Introduction}
In this paper, we study a priori interior Hessian estimates in all dimensions for the Lagrangian mean curvature equation
\begin{equation}
    F(D^{2}u)=\sum _{i=1}^{n}\arctan \lambda_{i}=\psi(x) \label{s}
\end{equation} 
under the assumption that $|\psi|\geq (n-2)\frac{\pi}{2}+\delta$ where $\delta>0$, and $\psi$ has bounded second derivatives.
Here $\lambda_i$'s are the eigenvalues of the Hessian matrix $D^2u$ and then the phase $\psi$  becomes a potential for the mean curvature of the Lagrangian submanifold $(x,Du(x))\subseteq \mathbb{R}^n\times\mathbb{R}^n$. When the phase $\psi$ is constant, denoted by $c$, $u$ solves the special Lagrangian equation 
 \begin{equation}
\sum _{i=1}^{n}\arctan \lambda_{i}=c \label{s1}
\end{equation} or equivalently,
\[ \cos c \sum_{1\leq 2k+1\leq n} (-1)^k\sigma_{2k+1}-\sin c \sum_{0\leq 2k\leq n} (-1)^k\sigma_{2k}=0.
\]
 Equation (\ref{s1}) originates in the special Lagrangian geometry by Harvey-Lawson \cite{HL}. The Lagrangian graph $(x,Du(x)) \subset \mathbb{R}^n\times\mathbb{R}^n$ is called special
when the argument of the complex number $(1+i\lambda_1)...(1+i\lambda_n)$
or the phase $\psi$ is constant, and it is special if and only if $(x,Du(x))$ is a
(volume minimizing) minimal surface in $(\mathbb{R}^n\times\mathbb{R}^n,dx^2+dy^2)$ \cite{HL}.

A dual form of (\ref{s1}) is the Monge-Amp\'ere equation
\begin{equation*}
    \sum_{i=1}^n \ln\lambda_i=c.
\end{equation*}
This is the potential equation for special Lagrangian submanifolds in $(\mathbb {R}^n\times \mathbb {R}^n, dxdy)$ as interpreted in \cite{Hi}. The gradient graph $(x,Du(x))$ is volume maximizing in this pseudo-Euclidean space as shown by Warren \cite{W}. In the 1980s, Mealy \cite{Me} showed that an equivalent algebraic form of the above equation is the potential equation for his volume maximizing special Lagrangian submanifolds in $(\mathbb R^n\times \mathbb R^n, dx^2-dy^2)$.

The arctangent operator or the logarithmic operator is concave if $u$ is convex, or if the Hessian of $u$ has a lower bound $\lambda\geq 0$. Certain concavity properties of the arctangent operator are still preserved for saddle $u$. The concavity of the arctangent operator in (\ref{s}) depends on the range of the Lagrangian phase. The phase $(n-2)\frac{\pi}{2}$ is called critical because the level set $\{ \lambda \in \mathbb{R}^n \vert \lambda$ satisfying $ (\ref{s})\}$ is convex only when $|\psi|\geq (n-2)\frac{\pi}{2}$ \cite[Lemma 2.2]{YY}. The concavity of the level set is evident for $|\psi|\geq (n-1)\frac{\pi}{2}$ since that implies $\lambda>0$ and then $F$ is concave. For solutions of (\ref{s1}) with critical and supercritical phases $|\psi|\geq (n-2)\frac{\pi}{2}$, Hessian estimates have been obtained by Warren-Yuan \cite{WY9,WY} and Wang-Yuan \cite{WaY}. 
If the phase is subcritical $|\psi|<(n-2)\frac{\pi}{2}$, solutions of (\ref{s1}) fail to have interior estimates as shown in examples of Nadirashvili-Vl\u{a}du\c{t} \cite{NV} and Wang-Yuan \cite{WangY}.

Our main results in this paper are the following:
\begin{theorem}\label{main1}
Let $u$ be a $C^4$ solution of (\ref{s}) on $B_{R}(0)\subset \mathbb{R}^{n}$ where $\psi\in C^{1,1}(B_{R}) $, $|\psi|\geq (n-2)\frac{\pi}{2}+\delta$. Then we have 
\begin{equation}
    |D^2u(0)|\leq C_1\exp [C_2\max_{B_R(0)}|Du|^{2n-2}/R^{2n-2}] \label{H}
\end{equation}
where $C_1$ and $C_2$ are positive constants depending on $||\psi||_{C^{1,1}(B_{R})}$, $n$, and $\delta$.
\end{theorem}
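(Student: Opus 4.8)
The strategy follows the classical approach pioneered by Warren–Yuan for the special Lagrangian equation, adapted to handle the non-constant phase. The plan is to prove a pointwise gradient estimate for the logarithm of the largest eigenvalue of $D^2u$ — equivalently, to bound $b = \ln\sqrt{1+\lambda_{\max}^2}$ — via an integral (Michael–Simon type) Sobolev inequality on the Lagrangian submanifold $M = (x, Du(x))$, combined with a Jacobi-type differential inequality for $b$ on $M$. First I would record the geometric setup: on $M$ with its induced metric $g = I_n + (D^2u)^2$, the phase $\psi$ pulls back to a function whose gradient is (up to a rotation) the mean curvature vector $\vec{H}$, and the induced Laplacian $\Delta_g$ acting on smooth functions has a known expression in terms of $g^{ij}\partial_{ij}$. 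The supercriticality hypothesis $|\psi| \geq (n-2)\frac{\pi}{2} + \delta$ is exactly what forces the level set $\{F = \psi\}$ to be convex with a quantitative modulus depending on $\delta$, which gives two crucial consequences: (i) a lower bound on the smallest eigenvalue in terms of the others (so that $\lambda_{\min} \geq -C(\delta)$ cannot be too negative when another eigenvalue is large), and (ii) sign/size control on certain sums $\sum_i \frac{1}{1+\lambda_i^2}$ and $\sum_{i \neq j}$ type quantities that appear when differentiating the equation twice.

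The key computational step is deriving the differential inequality. I would differentiate equation (\ref{s}) once and twice along $M$; the first derivative gives $g^{ij}\partial_i(\ln\det)$-type identities linking $D^3u$ to $D\psi$, and the second derivative, after contracting appropriately and using the concavity of the arctangent operator on the supercritical range, yields an inequality of the form $\Delta_g b \geq -c_1|D\psi|_g^2 - c_2 - (\text{good negative terms involving }|D^3u|^2)$, or more precisely that $b$ is subharmonic on $M$ up to a controlled error coming from $\|\psi\|_{C^{1,1}}$. Here the bounded-second-derivative hypothesis on $\psi$ enters: the error term involves $|D^2\psi|$ and $|D\psi|^2$, both bounded by $\|\psi\|_{C^{1,1}}$. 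This is where I expect the main obstacle to lie — carefully tracking all the terms produced by the non-constant phase and showing that the "bad" terms are either dominated by the good Hessian-gradient terms (via a Cauchy–Schwarz / absorption argument) or are bounded by the $C^{1,1}$ norm of $\psi$, so that one genuinely gets $\Delta_g b \geq -K$ with $K = K(\|\psi\|_{C^{1,1}}, n, \delta)$.

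With the subharmonicity-up-to-error in hand, I would run the integral iteration: test the differential inequality against a cutoff $\eta$ supported in a coordinate ball, integrate by parts on $M$, and apply the Michael–Simon–Sobolev inequality (whose constant involves $|\vec H| = |D_M\psi|$, again controlled by $\|\psi\|_{C^{1,1}}$) to bootstrap an $L^p$ bound on $e^b$ for all $p$, then a Moser-type iteration or a direct $L^\infty$–$L^p$ estimate to conclude $\sup_{B_{R/2}} b \leq C(\|\psi\|_{C^{1,1}}, n, \delta) \big(1 + \fint_{B_R \cap M} b \big)$, and finally bound the average of $b$ by the volume of $M$ over $B_R$, which is controlled by $\max_{B_R}|Du|$. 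Unwinding $b = \ln\sqrt{1+\lambda_{\max}^2}$ and keeping track of how the ball radius $R$ enters (by the scaling $u(x) \mapsto u(Rx)/R^2$) produces the stated bound $|D^2u(0)| \leq C_1\exp[C_2 \max_{B_R}|Du|^{2n-2}/R^{2n-2}]$; the exponent $2n-2$ should emerge from the Sobolev iteration in dimension $n$ combined with the volume growth estimate, matching the Warren–Yuan bound in the constant-phase case.
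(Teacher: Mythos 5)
Your outline matches the paper's strategy for the first half: the Jacobi-type inequality for $b=\ln\sqrt{1+\lambda_{\max}^2}$ with error controlled by $\|\psi\|_{C^{1,1}}$, the role of supercriticality via Lemma \ref{y1}, and a Michael--Simon type mean value/Sobolev inequality on the graph with $|\vec H|=|\nabla_g\psi|$ bounded. But the final step as you state it contains a genuine gap: you claim to ``bound the average of $b$ by the volume of $M$ over $B_R$, which is controlled by $\max_{B_R}|Du|$.'' The volume element of the (non-minimal) Lagrangian graph is $\sqrt{\det(I+(D^2u)^2)}\,dx\sim\prod_i(1+\lambda_i^2)^{1/2}$, which is of the order of a power of the Hessian, not the gradient; it is precisely the quantity you are trying to estimate, so this step is circular as written. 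What actually closes the argument is that the Sobolev inequality produces not the full volume but the \emph{weighted} volume $\int b\,\sum_i\frac{1}{1+\lambda_i^2}\sqrt{\det g}\,dx$, and a conformality identity shows $\sum_i\frac{1}{1+\lambda_i^2}\sqrt{\det g}=c_0(x)+c_1(x)\sigma_1+\dots+c_{n-1}(x)\sigma_{n-1}$ --- crucially with no $\sigma_n$ term. One then exploits the divergence structure $k\sigma_k=\partial_i\bigl(\frac{\partial\sigma_k}{\partial u_{ij}}u_j\bigr)$ to integrate by parts inductively, each step trading one degree in $\sigma$ for one factor of $\|Du\|_{L^\infty}$; this induction is where the exponent $2n-2$ comes from, and in the variable-phase case the coefficients $c_k(x)$ depend on $\psi$, producing extra terms $Dc_k\cdot L_{\sigma_k}Du$ that must be absorbed. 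Without this conformality-plus-divergence-structure step there is no route from the graph to its height.

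A second, related issue: you weaken the Jacobi inequality to $\Delta_g b\geq -K$, discarding the positive term. The form actually needed is $\Delta_g b\geq c(n)|\nabla_g b|^2-C$, because the gradient term is what lets you (i) apply the mean value inequality to $b^{n/(n-2)}$ (the cross terms from $\nabla_g(b^{2/(n-2)}\phi)$ must be absorbed), and (ii) control $\int\phi^2|\nabla_g b|^2\,dv_g$ by the weighted volume, which reappears inside the inductive integration by parts. With only $\Delta_g b\geq -K$ neither absorption goes through. Two smaller points you should also address: $b$ is only Lipschitz where the top eigenvalue has multiplicity, so the pointwise inequality must be upgraded to an integral (viscosity/distributional) version before testing against cutoffs; and the Sobolev exponent $\frac{2n}{n-2}$ degenerates at $n=2$, which requires a separate reduction (e.g.\ appending a dummy eigenvalue to pass to $n=3$).
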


In order to link the dependence of Hessian estimates
to the potential $u$ itself, we have the following gradient estimate.
\begin{theorem}\label{main2}
Let $u$ be a $C^3$ solution of (\ref{s}) on $B_{3R}(0)\subset \mathbb{R}^{n}$ where $\psi\in C^{1,1}(B_{3R}) $, $|\psi|\geq (n-2)\frac{\pi}{2}+\delta$. Then we have 
\begin{equation}
    \max_{B_{R}(0)}|Du|\leq C_3 osc_{B_{3R}(0)}\frac{u}{R}+C_4(n) \label{g1}
\end{equation}
where $C_3$ is a positive constant depending on $||\psi||_{C^{1}(B_{3R})}$, $n$, and $\delta$.
\end{theorem}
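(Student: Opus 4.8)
The plan is to reduce the estimate, through the supercritical phase hypothesis, to the classical gradient bound for semiconvex functions. First I would normalize: replacing $u$ by $-u$ and $\psi$ by $-\psi$ if necessary, which does not affect $\|\psi\|_{C^{1}(B_{3R})}$, we may assume $\psi\geq (n-2)\frac{\pi}{2}+\delta$ on $B_{3R}(0)$. Then, at any $x\in B_{3R}(0)$, order the eigenvalues $\lambda_{1}\geq\cdots\geq\lambda_{n}$ of $D^{2}u(x)$; since each $\arctan\lambda_{i}<\frac{\pi}{2}$ we obtain $\arctan\lambda_{n}=\psi-\sum_{i<n}\arctan\lambda_{i}>(n-2)\frac{\pi}{2}+\delta-(n-1)\frac{\pi}{2}=-\frac{\pi}{2}+\delta$, hence $\lambda_{n}>\tan(\delta-\frac{\pi}{2})=-\cot\delta$. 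Thus $D^{2}u\geq -(\cot\delta)\,I$ on $B_{3R}(0)$, so $u$ is semiconvex there; applying the same estimate to pairs of eigenvalues (whose arctangents sum to more than $\delta>0$) also recovers the well-known fact that $D^{2}u$ has at most one negative eigenvalue.

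Second, I would invoke the gradient estimate for convex functions applied to $\tilde u:=u+\frac{K}{2}|x|^{2}$ with $K:=\cot\delta$, which by the previous step is convex on $B_{3R}(0)$. For $x\in B_{R}(0)$ with $D\tilde u(x)\neq 0$, the point $z:=x+2R\,D\tilde u(x)/|D\tilde u(x)|$ lies in $B_{3R}(0)$, and the supporting-hyperplane inequality gives $2R\,|D\tilde u(x)|\leq\tilde u(z)-\tilde u(x)\leq osc_{B_{3R}(0)}\tilde u$. Undoing the shift, bounding $osc_{B_{3R}(0)}\tilde u\leq osc_{B_{3R}(0)}u+\frac{9}{2}KR^{2}$, and using $|x|<R$, one arrives at an estimate of the asserted shape $\max_{B_{R}(0)}|Du|\leq C\,osc_{B_{3R}(0)}u/R + C(n,\delta)R$.

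To sharpen the elementary term to the dependence stated in the theorem (the additive constant on $n$ alone, $C_{3}$ on $\|\psi\|_{C^{1}(B_{3R})}$, $n$, $\delta$), I would run the argument on the Lagrangian submanifold $M=(x,Du(x))\subset\mathbb{R}^{n}\times\mathbb{R}^{n}$ with induced metric $g=I+(D^{2}u)^{2}$: there each component $u_{k}$ of $Du$ is almost harmonic, $\Delta_{g}u_{k}=(g^{-1}D\psi)_{k}$ with $|\Delta_{g}u_{k}|\leq\|\psi\|_{C^{1}}$ since $0<g^{-1}\leq I$, while $|\nabla_{g}u_{k}|_{g}\leq 1$ and the coordinate functions $x_{k}$ are likewise controlled, and the semiconvexity obtained in the first step provides the geometric control of the portion of $M$ over $B_{3R}(0)$ needed to convert the near-harmonicity of $Du$ into the precise additive constant. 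I expect this last reconciliation of constants to be the only substantive point: the supercriticality is used solely to produce the pointwise Hessian lower bound, after which the estimate is a soft consequence of (semi)convexity, and the care lies entirely in exploiting $\Delta_{g}(Du)$ being bounded by $\|\psi\|_{C^{1}}$ together with the semiconvex geometry to keep the additive term purely dimensional.
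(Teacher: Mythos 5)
Your Steps 1 and 2 are correct and already yield a complete, more elementary proof of an estimate of the right shape; in fact the paper records exactly this alternative in the remark following its own proof of Theorem \ref{main2}: supercriticality gives $D^2u\geq -\cot\delta\, I_n$ (Lemma \ref{y1}(4), which you rederive correctly), so $\tilde u=u+\tfrac{\cot\delta}{2}|x|^2$ is convex and its gradient is dominated by its oscillation. The paper's actual proof is different: it is a maximum-principle argument for the barrier $w=\eta|Du|+Au^2$ with $\eta=1-|x|^2$ and $A=n/M$, using the operator $L=\sum_i g^{ii}\partial_{ii}$, the linearized equation $g^{ab}u_{jab}=\psi_j$ to get $L(|Du|)\geq\sum_i\psi_iu_i/|Du|$, and the eigenvalue ordering from Lemma \ref{y1} to control $g^{nn}$ from both sides. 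What that buys is precisely the placement of constants claimed in the statement: all of the $\|\psi\|_{C^1}$- and $\delta$-dependence is absorbed into the multiplicative constant $C_3$, and the additive constant $C_4$ comes out purely dimensional. Your route buys a multiplicative constant that is even independent of $\|D\psi\|$ (as the paper's remark also notes), but your additive term is of size $C(n)\cot\delta\cdot R$: it depends on $\delta$ and degenerates as $\delta\to 0$, so as written you prove a slightly weaker statement than the one asserted.

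Your Step 3, which you rightly flag as the substantive point if one insists on $C_4=C_4(n)$, is not a proof. The identity $\Delta_g u_k=g^{kk}\psi_k$ (at a point where $D^2u$ is diagonal) is correct and gives $|\Delta_g u_k|\leq|D\psi|$, but converting near-harmonicity of $Du$ on the graph $(x,Du(x))$ into a pointwise bound with a dimensional additive constant requires a mean value inequality or maximum principle on that submanifold over $B_{3R}$, together with quantitative control of its geometry -- and that control is essentially what the gradient (and Hessian) estimates are meant to provide, so the plan as stated is circular or at least unsubstantiated. If you want the constants exactly as in the theorem, you need the barrier computation (or an equivalent quantitative argument); otherwise you should state your conclusion honestly with the $\delta$-dependent additive term, which suffices for the way Theorem \ref{main2} is used in the proof of Theorem \ref{main1}.
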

\begin{remark}
For the constant critical and supercritical phase equation (\ref{s1}), the constants $C_3$ and $C_4$ are only dimensional \cite[Theorem 1.2]{WY}. The constant $C_4$ was further reduced to $0$ in Yuan's unpublished 2015 notes on special Lagrangian equations.
\end{remark}
\begin{remark}
One application of the above estimates is the regularity (analyticity) of
$C^0$ viscosity solutions of (\ref{s}) where $|\psi|\geq (n-2)\frac{\pi}{2}+\delta$; solutions of the Dirichlet problem of (\ref{s}) with continuous boundary data enjoy interior regularity, as shown in \cite[Theorem 1.1]{AB1}. Another application is the regularity of convex $C^0$ viscosity solutions of (\ref{s}) where $\psi\in C^{2,\alpha}(B_1)$, as shown in \cite[Theorem 1.1]{BS}.
\end{remark}

\begin{remark} For Theorem \ref{main1}, an assumption weaker than $C^{1}$ on $\psi$ leads to counterexamples. For example, in two dimensions, we consider a boundary value problem of (\ref{s}) on the unit ball $B_1(0)$ where the phase is in $C^{\alpha}$ with $\alpha\in (0,1)$: $\psi(x)=\frac{\pi}{2}-\arctan (\alpha^{-1}|x|^{1-\alpha})$ and $u(x)=\int_{0}^{|x|}t^{\alpha}dt$ on $\partial B_1$. Now if Hessian estimates hold good for a H\"older continuous phase then by a smooth approximation of the phase and boundary data we would find a solution that is $C^{2,\alpha}$ in the interior of $B_1$. However, this boundary value problem admits a non $C^2$ unique viscosity solution $u$ with gradient $D u=|x|^{\alpha-1}x$, thereby proving a contradiction. 
\end{remark}
\begin{remark}
The existence of interior estimates for solutions of (\ref{s}) with critical and supercritical phase where $\psi\in C^{1,\varepsilon_{0}}$, or even $|\psi|\geq (n-2)\frac{\pi}{2}$ where $\psi\in C^{1,1}$ are still open questions. Again if the phase is subcritical then even for the constant phase equation (\ref{s1}), singular $C^{1,\varepsilon}$ viscosity solutions were constructed in \cite{NV, WangY}.
\end{remark}
\medskip

For the two dimensional case, Heinz \cite{H} derived a Hessian bound for solutions of the Monge-Ampère type equation including (\ref{s1}); Pogorelov \cite{P1} derived Hessian estimates for solutions of these equations including (\ref{s1}) with $|\psi|\geq\frac{\pi}{2}$. Later Pogorelov \cite{P2} constructed his famous
counterexamples for the three dimensional Monge-Ampère equation $\sigma_3(D^2u) = \det(D^2u) = 1$, which also
serve as counterexamples for cubic and higher order symmetric $\sigma_k$ equations (see \cite{U1}). Gregori \cite{Gg} extended Heinz’s estimate to a gradient
bound in terms of the heights of the two dimensional minimal surfaces, and for graphs with non-zero mean curvature an additional requirement on the length of the mean curvature vector was assumed. Hessian estimates for solutions to Monge-Ampère equations and the $\sigma_k$
equations for $k\geq 2$ were established by Pogorelov \cite{P2} and Chou-Wang \cite{ChW} under certain strict convexity constraints. For the three dimensional case, Trudinger \cite{T2}, Urbas \cite{U2,U3}, and Bao-Chen
\cite{BC} obtained pointwise Hessian estimates in terms of certain integrals of
the Hessian, for $\sigma_k$ equations and the special Lagrangian equation (\ref{s1}) with $c=\pi$ respectively.  Bao-Chen-Guan-Ji \cite{BCGJ} obtained pointwise Hessian estimates for strictly convex solutions to quotient equations $\sigma_n=\sigma_k$ in terms of certain
integrals of the Hessian. 
  Recently, along the integral way, Qiu \cite{Q} proved Hessian estimates for solutions of the three dimensional quadratic
Hessian equation with a $C^{1,1}$ variable right hand side. Hessian estimates for
convex solutions of general quadratic Hessian equations were obtained via a new pointwise approach by Guan-Qiu \cite{GQ}. For convex viscosity solutions of (\ref{s1}) Hessian estimates have been obtained by Chen-Warren-Yuan \cite{WYJ} and Chen-Shankar-Yuan \cite{RYJ}. Hessian estimates for semiconvex smooth solutions and almost convex viscosity solutions of $\sigma_2(D^2u)=1$ were recently established by Shankar-Yuan in \cite{RY1} and \cite{RYY} respectively.

Our proof of the Hessian estimates goes as follows: we first bound the Hessian of $u$ by its integral followed by
an integral of its gradient, then by the volume of the Lagrangian graph, and lastly,
by the height of the Lagrangian graph, which is the gradient of the solution of (\ref{s}). The presence of $\psi(x)$ in the non-uniformly elliptic equation \eqref{s} presents unique challenges. One of the difficulties is the unavailability of harmonic co-ordinates $\Delta_g x=0$ since the Lagrangian graph $(x,Du(x))\subset \mathbb{R}^n\times\mathbb{R}^n$ is not a minimal surface. As a result, the linearized operator of (\ref{s}) at $u$ does not represent the Laplace-Beltrami operator, like in the constant phase case \cite{WaY,WY}.
Another hurdle is proving Jacobi inequalities, which require differentiating \eqref{s} twice. 
In the homogeneous case, one can differentiate \eqref{s1} and recover $D^3u$, but upon differentiating \eqref{s}, we end up with terms involving derivatives of $\psi$, which, a priori, could be large compared to the coefficients $DF(\lambda)$ of $D^3u$. We prove a Jacobi type inequality for $b=\ln\sqrt{1+\lambda_{\max}^2}$ where its Hessian is bounded below by its gradient and the $C^{1,1}$ norm of $\psi$. Applying a mean value inequality for $b$ and  certain Sobolev inequalities we estimate the integral of $b$ by a weighted volume of the non-minimal Lagrangian graph. By a conformality identity, the weighted volume element turns out to be a linear combination of the elementary symmetric functions of $D^2u$.
The linear combination poses yet another difficulty but we take advantage of the divergence type structure to bound
the weighted volume of the Lagrangian graph in terms of its height and the $C^{1,1}$ norm of $\psi$.

Through out this paper we assume $\psi\geq (n-2)\frac{\pi}{2}+\delta$ since
by symmetry $\psi\leq- (n-2)\frac{\pi}{2}-\delta$ can be treated similarly. This paper is divided into the following sections: in section two, we  introduce some notations and state some well known trigonometric inequalities satisfied by solutions of (\ref{s}), which will be used later in the proofs. In section three, we establish the gradient estimates, thereby proving Theorem \ref{main2}. In section four, we prove the pointwise and integral Jacobi inequality. In section five, we prove a mean value inequality for functions satisfying a Jacobi type inequality on submanifolds with high co-dimension, followed by the proof of Theorem \ref{main1}.\\

\section{Preliminaries}

\subsection{Notations} 
We introduce some notations that will be used in this paper.
The induced Riemannian metric on the Lagrangian submanifold $X=(x,Du(x))\subset \mathbb{R}^n\times\mathbb{R}^n$ is given by
\[g=I_n+(D^2u)^2 .
\]
We denote
 \begin{align*} 
    \partial_i=\frac{\partial}{\partial_{x_i}}\\
    \partial_{ij}=\frac{\partial^2}{\partial_{x_i}\partial_{x_j}}\\
    u_i=\partial_iu\\
    u_{ij}=\partial_{ij}u.
    \end{align*}
  Note that for the functions defined below, the subscripts on the left do not represent partial derivatives\begin{align*}
    b_k=(\ln\sqrt{1+\lambda_1^2}+...+\ln\sqrt{1+\lambda_k^2})/k\\
    h_{ijk}=\sqrt{g^{ii}}\sqrt{g^{jj}}\sqrt{g^{kk}}u_{ijk}\\
    g^{ii}=\frac{1}{1+\lambda_i^2}.
    \end{align*}
Here $(g^{ij})$ is the inverse of the matrix $g$ and $h_{ijk}$ denotes the second fundamental form when the Hessian of $u$ is diagonalized.
The volume form, gradient, and inner product with respect to the metric $g$ are given by
\begin{align*}
    dv_g=\sqrt{\det g}dx\\
    \nabla_g v=g^{ij}v_iX_j\\
    \langle\nabla_gv,\nabla_g w\rangle_g =g^{ij}v_iw_j\\
    |\nabla_gv|^2=\langle\nabla_gv,\nabla_g v\rangle_g.
\end{align*}

\subsection{Laplace-Beltrami operator and mean curvature formula} 
  Taking variations of the energy functional $\int |\nabla_g v|^2 dv_g$ with respect to $v$, one has the Laplace-Beltrami operator of the metric $g$:
\begin{align}
\Delta_g  =\frac{1}{\sqrt{ g}}\partial_i(\sqrt{ g}g^{ij}\partial_j )
=g^{ij}\partial_{ij}+\frac{1}{\sqrt{g}}\partial_i(\sqrt{g}g^{ij})\partial_j \label{2!}\\
=g^{ij}\partial_{ij}-g^{jp}\psi_q u_{pq} \partial_j. \nonumber
\end{align} The last equation follows from the following intrinsic and then extrinsic computations:

\begin{align}
    \frac{1}{\sqrt{g}}\partial_i(\sqrt{g}g^{ij})=\frac{1}{\sqrt{g}}\partial_i(\sqrt{g})g^{ij}+\partial_ig^{ij}=\frac{1}{2}(\partial_i \ln g)g^{ij}+\partial_kg^{kj}\nonumber\\
    =\frac{1}{2}g^{kl}\partial_i g_{kl}g^{ij}-g^{kl}\partial_k g_{lb}g^{bj}\nonumber\\
    =-g^{jp}g^{ab}u_{abq}u_{pq}
    =-g^{jp}\psi_q u_{pq}\label{lolz}
\end{align}
where the last equation follows from (\ref{111}) and (\ref{linearize}) below.
The first
derivative of the metric $g$ is given by 
\begin{align}
    \partial_i g_{ab}=\partial_i(\delta_{ab}+u_{ak}u_{kb})=u_{aik}u_{kb}+u_{bik}u_{ka}\overset{\text{at } x_0}{=}u_{abi}(\lambda_a+\lambda_b) \label{111}
    \end{align}
    assuming the Hessian of $u$ is diagonalized at $x_0$.
On taking the gradient of both sides of the Lagrangian mean curvature equation (\ref{s}), we get
\begin{equation}
\sum_{a,b=1}^{n}g^{ab}u_{jab}=\psi_j .\label{linearize}
\end{equation}

The coefficients, given by (\ref{lolz}), are in fact equal to the tangential part of the following decomposition of $X_{ij}$
where $X=(x,Du(x))$
\begin{align*}
     X_{ij}=(X_{ij})^T+(X_{ij})^N=\langle X_{ij},X_a\rangle g^{ab}X_b+II_{ij}\\
     =u_{ijk}u_{ka}g^{ab}X_b+II_{ij}\overset{\text{define }}=\Gamma^b_{ij}X_b+II_{ij}
\end{align*}
    where $\Gamma_{ij}^b$ is the Christoffel symbol.

On taking trace with respect to the metric $g$ and projecting to the tangential direction $X_l=(\partial_l, Du_l)$, we get
\[g^{ab}\Gamma^m_{ab}=g^{ab}u_{kab}u_{kl}g^{lm}=\psi_k u_{kl}g^{lm},
    \] 
which is the coefficient derived in (\ref{lolz}).
In turn, the normal part is
\[\vec{H}=g^{ab}II_{ab}=g^{ab}(\partial_{ab}X-\Gamma^m_{ab}\partial_m X)=\Delta_g X.
\]

     On the other hand, directly projecting to the normal direction  $J X_l$, we get the mean curvature vector of the Lagrangian submanifold $(x,Du(x))$
    \begin{equation}
    \vec{H}=g^{ab}\langle(-Du_{ab},0),-X_l\rangle g^{lm}JX_m=g^{ab}u_{lab}g^{lm}JX_m=\psi_l g^{lm}JX_m=J\nabla_g \psi \label{mean}
\end{equation}
where $\nabla_g$ is the gradient operator for the metric $g$ and $J$ is the complex structure, or the $\frac{\pi}{2}$ rotation matrix in  $\mathbb{R}^n\times \mathbb{R}^n$ and we used (\ref{linearize}) for the second  last equation. Note that the above formula for mean curvature of the Lagrangian submanifold $(x,Du(x))$ was originally found in \cite[(2.19)]{HL}.

\begin{remark}
When $\psi$ is constant, harmonic co-ordinates $\Delta_g x=0$ are available, which 
 reduces the Laplace-Beltrami operator on the minimal submanifold $\{(x,Du(x))|x\in B_R(0)\}$ to the linearized operator of (\ref{s1}) at $u$.
 Also, note that in this paper, by our assumption on $\psi$, $|H|$ is bounded. 
 \end{remark} 
Next we state the following Lemma.
\begin{lemma}\label{y1}
		Suppose that the ordered real numbers $\lambda_{1}\geq \lambda_{2}\geq...\geq \lambda_{n}$ satisfy (\ref{s}) with $\psi\geq (n-2)\frac{\pi}{2}$.
		Then we have \begin{enumerate}
			\item $\lambda_{1}\geq \lambda_{2}\geq...\geq \lambda_{n-1}>0, \lambda_{n-1}\geq |\lambda_{n}|$,
			\item $\lambda_{1}+(n-1)\lambda_{n}\geq 0$,
			\item $\sigma_{k}(\lambda_{1},...,\lambda_{n})\geq 0$ for all $1\leq k\leq n-1$ and $n\geq 2$,
			\item  if $\psi\geq (n-2)\frac{\pi}{2}+\delta$, then $D^2u\geq -\cot \delta I_n$.
			
		\end{enumerate}
		
	\end{lemma}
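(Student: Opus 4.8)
The plan is to treat the four items in order of difficulty, the first two resting only on the fact that $\arctan$ is odd, increasing, and bounded by $\frac\pi2$, and the real work being (3). For (1) I would argue by contradiction: if $\lambda_{n-1}\le0$ then $\arctan\lambda_{n-1},\arctan\lambda_n\le0$ while the other $n-2$ angles are $<\frac\pi2$, forcing $\sum_i\arctan\lambda_i<(n-2)\frac\pi2$; hence $\lambda_{n-1}>0$, and by the ordering $\lambda_1\ge\cdots\ge\lambda_{n-1}>0$. The same contradiction excludes $\arctan\lambda_{n-1}+\arctan\lambda_n<0$, so $\lambda_{n-1}\ge|\lambda_n|$. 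Item (4) is one line: since $\arctan\lambda_i<\frac\pi2$ for $i\le n-1$, we get $\arctan\lambda_n=\psi-\sum_{i<n}\arctan\lambda_i>(n-2)\frac\pi2+\delta-(n-1)\frac\pi2=-(\frac\pi2-\delta)$, whence $\lambda_n>-\cot\delta$ and $D^2u\ge\lambda_n I_n\ge-\cot\delta\,I_n$.

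For (2) I may assume $\lambda_n<0$ (otherwise it is trivial) and set $\theta:=\arctan|\lambda_n|\in(0,\frac\pi2)$. Since $\arctan\lambda_1$ is the largest of the angles and $\sum_{i=1}^{n-1}\arctan\lambda_i\ge(n-2)\frac\pi2+\theta$, averaging gives $\arctan\lambda_1\ge\frac1{n-1}\big((n-2)\frac\pi2+\theta\big)\in(0,\frac\pi2)$. With $x:=\frac{\pi/2-\theta}{n-1}\in\big(0,\frac{\pi}{2(n-1)}\big)$ this reads $\lambda_1\ge\cot x$, while $-(n-1)\lambda_n=(n-1)\tan\theta=(n-1)\cot\big((n-1)x\big)$; so (2) amounts to $\tan\big((n-1)x\big)\ge(n-1)\tan x$ on $\big(0,\frac{\pi}{2(n-1)}\big)$, which holds because both sides vanish at $0$ and $\frac{d}{dx}\tan(kx)=k\sec^2(kx)$ increases in $k\ge1$ there.

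The heart of the lemma is (3), and the decisive step is a substitution. Assume $\lambda_n<0$ (otherwise all $\lambda_i\ge0$ and every $\sigma_k\ge0$ trivially) and write $\lambda_i=\cot\phi_i$ with $\phi_i\in(0,\pi)$; by (1) this puts $\phi_1,\dots,\phi_{n-1}\in(0,\frac\pi2)$, $\phi_n\in(\frac\pi2,\pi)$, and since $\arctan\lambda_i=\frac\pi2-\phi_i$ the supercriticality $\psi\ge(n-2)\frac\pi2$ is exactly $\sum_{i=1}^n\phi_i\le\pi$. Multiplying out,
\[\Big(\prod_{i=1}^n\sin\phi_i\Big)\sum_{k=0}^n\sigma_k(\lambda)\,s^k=\prod_{i=1}^n\big(\sin\phi_i+s\cos\phi_i\big),\]
and since $\sin\phi_i+s\cos\phi_i=\cos\phi_i\,(s+\tan\phi_i)$ with $\cos\phi_i>0$ for $i<n$, while $\sin\phi_n+s\cos\phi_n=|\cos\phi_n|\,(\rho-s)$ with $\rho:=\tan(\pi-\phi_n)>0$, the right side equals $\big|\prod_i\cos\phi_i\big|\,(\rho-s)\prod_{i<n}(s+\tan\phi_i)$. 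Comparing coefficients of $s^k$ for $0\le k\le n-1$, the assertion $\sigma_k(\lambda)\ge0$ becomes
\[\rho\,\sigma_i\big(\tan\phi_1,\dots,\tan\phi_{n-1}\big)\ \ge\ \sigma_{i+1}\big(\tan\phi_1,\dots,\tan\phi_{n-1}\big),\qquad 0\le i\le n-2.\]

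To finish (3): since $\tan\phi_1,\dots,\tan\phi_{n-1}>0$, the elementary bound $(i+1)\,\sigma_{i+1}(a)\le\sigma_1(a)\,\sigma_i(a)$ for positive $a_j$ yields $\sigma_{i+1}/\sigma_i\le\sigma_1/(i+1)\le\sigma_1$, and by super-additivity of $\tan$ on $[0,\frac\pi2)$ (all partial sums here are $\le\pi-\phi_n<\frac\pi2$) we get $\sigma_1=\sum_{i<n}\tan\phi_i\le\tan\big(\sum_{i<n}\phi_i\big)\le\tan(\pi-\phi_n)=\rho$, which is the required inequality. I expect (3) to be the only genuine obstacle: the one idea to find is $\lambda_i=\cot\phi_i$, which turns the transcendental level set into the half-space $\sum\phi_i\le\pi$ and reveals the factored polynomial above, after which only elementary symmetric-function estimates and the convexity of $\tan$ remain. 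The degenerate positions ($\lambda_n=0$, or some $\phi_i\to\frac\pi2$) are absorbed by the same scheme.
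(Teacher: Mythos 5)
Your proposal is correct, but it is doing genuinely more work than the paper, which offers no proof at all: properties (1)--(3) are cited from Wang--Yuan \cite[Lemma 2.2]{WaY} and property (4) from \cite{YY}. Your arguments for (1), (2) and (4) are the standard angle-counting ones and check out (for (2), the reduction to $\tan((n-1)x)\geq(n-1)\tan x$ on $(0,\tfrac{\pi}{2(n-1)})$ via $\lambda_1\geq\cot x$ and $-\lambda_n=\cot((n-1)x)$ is clean and the comparison of derivatives closes it). The substantive contribution is (3): the substitution $\lambda_i=\cot\phi_i$ converts the constraint to $\sum\phi_i\leq\pi$, and the factorization of $\prod(1+s\lambda_i)$ correctly reduces $\sigma_k(\lambda)\geq0$ to $\rho\,\sigma_i(a)\geq\sigma_{i+1}(a)$ for $a=(\tan\phi_1,\dots,\tan\phi_{n-1})$, which your chain $\sigma_{i+1}(a)\leq\sigma_1(a)\sigma_i(a)/(i+1)\leq\sigma_1(a)\sigma_i(a)$ together with the subadditivity bound $\sigma_1(a)\leq\tan\bigl(\sum_{i<n}\phi_i\bigr)\leq\rho$ (legitimate since $\sum_{i<n}\phi_i\leq\pi-\phi_n<\pi/2$) settles; all the sign conditions needed to divide and to compare coefficients ($\prod\sin\phi_i>0$, $\cos\phi_i\neq0$ by item (1)) are in place. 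One microscopic caveat: your contradiction for item (1) relies on at least one of the ``other $n-2$ angles'' being strictly below $\pi/2$, so for $n=2$ the degenerate point $\lambda_1=\lambda_2=0$, $\psi=0$ slips through and $\lambda_{n-1}>0$ fails there; this is a boundary defect of the statement itself rather than of your argument, and is harmless since the paper only invokes the lemma with the strict bound $\psi\geq(n-2)\tfrac{\pi}{2}+\delta$ and treats $n=2$ by lifting to $n=3$. In short: the approach is necessarily different from the paper's (which is a citation), and what your version buys is a self-contained elementary proof.
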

\begin{proof}
Properties (1), (2), and (3) follow from \cite[Lemma 2.2]{WaY}. Property (4) follows from \cite[Pg 1356]{YY}.
\end{proof}
\section{Gradient Estimates}
We prove Theorem \ref{main2}.
\begin{proof}
We assume $R=1$ by scaling $\frac{u(Rx)}{R^2}$.

We set $M=osc_{B_1}u$. W.l.o.g we assume $M>0$. Replacing $u$ by $u-\min_{B_1}u+M$ we now have 
\begin{equation}
M\leq u\leq 2M \label{q19}
\end{equation}
in $B_1$. We define 
\[w=\eta |Du|+Au^2\] with $\eta=1-|x|^2$ and 
\begin{equation}
A=\frac{n}{M}. \label{q29}
\end{equation}
If $w$ attains its supremum on
the boundary, then we are done. 
So we assume that $w$ attains its supremum at an interior point $x_0\in B_1$. We choose a co-ordinate system so that $D^2u$ is diagonalized at $x_0$. Let's assume that $u_n\geq \frac{|Du|}{\sqrt{n}}>0$ at $x_0$. For $1 \leq i\leq n$ we have
\begin{equation}
\frac{u_{i}u_{ii}}{|Du|}=|Du|_i=-\frac{\eta_i|Du|+2Auu_i}{\eta}. \label{g11}
\end{equation}
Observe that $u_{nn}<0$ since $A=\frac{n}{M}$. Since $\psi\geq(n-2)\frac{\pi}{2}+\delta$ we must have $\lambda_{min}=\lambda_n$ and $\lambda_k\geq |\lambda_n|$ by Lemma \ref{y1}. So we see that for $1\leq k\leq n-1$
\begin{equation}
    g^{nn}=\frac{1}{1+\lambda_n^{2}}\geq \frac{1}{1+\lambda_k^{2}}=g^{kk} \label{g12}
\end{equation}
and
\begin{equation}
    \frac{1}{g^{nn}}=1+\lambda_n^{2}<C(\delta) \label{k}.
\end{equation}

Next we define the following operator \[Lu=\sum_{i=1}^{n}g^{ii}u_{ii}.\] Note that this is the Laplace-Beltrami operator on minimal submanifolds.

Using (\ref{g11}) and (\ref{g12}) we have 
\[Lu(x_0)\geq g^{nn}u_{nn}=-g^{nn}\frac{|Du|(\eta_n|Du|+2Auu_n)}{nu_n}\]
which shows
\begin{equation}
    Lu(x_0)\geq -g^{nn}\frac{|Du|6n}{\eta}. \label{hh}
\end{equation}

Recalling the definition of $w$, we note the following for all  $1\leq i\leq n$ at $x_0$
\begin{align}
    w_{i}=\eta |Du|_i+\eta_i|Du|+2Auu_i \nonumber\\
    Lw=|Du|L\eta+2\sum_{a=1}^{n}g^{aa}\eta_{a}|Du|_{a}+\eta L(|Du|)+2AuLu+2A\sum_{a=1}^{n}g^{aa}u_a^2. \label{g5}
\end{align}

Next we observe the following
\begin{align}
\sum_{a,b=1}^{n}g^{ab}\partial_{ab}|Du|\nonumber\\
=\sum_{a,b,i=1}^{n}g^{ab}\left[  \frac{u_i u_{abi}}{|Du|}+\frac{u_{ai}u_{bi}}{|Du|}-\sum_{j=1}^{n}\frac{u_{aj}u_{bj}u_iu_j}{|Du|^3}\right]\nonumber\\
=\sum_{i=1}^{n}\frac{\psi_iu_i}{|Du|}+\sum_{a,b,i=1}^{n}g^{ab}\left[\frac{u_{ai}u_{bi}}{|Du|}-\sum_{j=1}^{n}\frac{u_{aj}u_{bj}u_iu_j}{|Du|^3}\right] \nonumber
\end{align}
where we get the last inequality using (\ref{linearize}).
Assuming $D^2u$ is diagonalized at $x_0$, we get 
\begin{equation}
    L(|Du|)(x_{0})= \sum_{i=1}^{n}\frac{\psi_iu_i}{|Du|}+\sum_{a=1}^{n}g^{aa}\frac{(|Du|^2-u_a^2)\lambda_a^2}{|Du|^3}\geq \sum_{i=1}^{n}\frac{\psi_iu_i}{|Du|}. \label{2.7}
\end{equation}
 We plug (\ref{2.7}) in (\ref{g5}) and on applying (\ref{hh}), (\ref{g11}), (\ref{g12}) we get the following at $x_0$
\begin{align*}
    Lw\geq -2ng^{nn}|Du|-2\sum_{a=1}^n g^{aa}\eta_a(\frac{\eta_a|Du|+2Auu_a}{\eta})+\\
    +\eta\sum_{i=1}^{n}\frac{\psi_iu_i}{|Du|}-6ng^{nn}\frac{|Du|}{\eta} 
    +\frac{2A}{n}g^{nn}|Du|^{2}\\
    \geq -2ng^{nn}|Du|-8g^{nn}\frac{|Du|}{\eta}-8g^{nn}Au\frac{|Du|}{\eta}+\eta \sum_{i=1}^{n}\frac{\psi_iu_i}{|Du|}\\
    -6ng^{nn}\frac{|Du|}{\eta}
    +\frac{2A}{n}g^{nn}|Du|^2.
\end{align*}
Noting that $Lw(x_0)\leq 0$, we divide the above inequality by $g^{nn}\frac{|Du|}{\eta}$ and on using (\ref{q19}), (\ref{q29}) we get the following at $x_0$
\begin{align*}
    0\geq -2n\eta-8-8Au-6n+\frac{2A}{n}\eta|Du|+\sum_{i=1}^{n}\frac{\psi_iu_i}{|Du|}\frac{\eta^2}{g^{nn}|Du|}\\
    \implies \eta|Du|\leq (12n+4)M-\sum_{i=1}^{n}\frac{\psi_iu_i}{2|Du|^{2}g^{nn}}\eta^{2}M\\
    \implies \eta|Du(x_0)|\leq (12n+4)M+\frac{|D\psi|}{2\eta|Du(x_0)|}\eta^3MC(\delta)
\end{align*}
where the last inequality follows from the Cauchy-Schwarz inequality and (\ref{k}).
Now solving the above quadratic expression in $\eta|Du(x_0)|$, we get
\[\eta|Du|(x_0)\leq C_1M+\sqrt{C_1^2M+C_2M}
\]
where \begin{align*}
    C_1=12n+4=C(n)\\
    C_2=C(n,||\psi||_{C^{1}(B_1)},\delta).
\end{align*}
This proves the gradient estimate: 
\begin{align*}
    |Du(0)|\leq w(0)\leq w(x_0)\leq \eta|Du|(x_0)+Au^2(x_0)\\
    \leq C_1M+\sqrt{C_1^2M^2+C_2M}+4nM\\
    \leq C(n,||\psi||_{C^{1}(B_1)},\delta)osc_{B_1}u+C(n).
\end{align*}
\end{proof}
\begin{remark} The above proof also follows from the observation that when $\psi$ lies in the supercritical range, $u$ is semiconvex [Lemma \ref{y1}]. On modifying $u$ to the convex function $
\tilde{u}(x)=u(x)+\cot(\delta)\frac{|x|^2}{2}$, the gradient estimate (which is independent of the Lipschitz norm of $\psi$) follows from the fact that the gradient of any convex
function is dominated by its oscillation.
\end{remark}
\section{The Jacobi inequality}
In this section we prove the Jacobi inequality and the integral Jacobi inequality, which is essential in proving the Hessian estimates. 
\begin{proposition}
Let $u$ be a smooth solution of (\ref{s}) in $\mathbb{R}^{n}$. Suppose that the Hessian $D^{2}u$ is diagonalized and the eigenvalue $\lambda_\gamma$ is distinct from all other eigenvalues of $D^2u$ at point $x_0$. Then we have the following at $x_0$
\begin{equation}
    |\nabla_g \ln\sqrt{1+\lambda_\gamma^2}|^2=\sum_{k=1}^{n}\lambda_\gamma^2h_{rrk}^2\label{grad}
\end{equation} and
\begin{align}
    \Delta_g\ln\sqrt{1+\lambda_\gamma^2}=\nonumber\\
    (1+\lambda_\gamma^2)h_{rrr}^2+\sum_{k\neq r}\left[\frac{2\lambda_\gamma}{\lambda_\gamma-\lambda_k}+\frac{2\lambda_\gamma^2\lambda_k}{\lambda_\gamma-\lambda_k}         \right]h_{kkr}^2 \nonumber\\
    +\sum_{k\neq r}\left[1+\frac{2\lambda_\gamma}{\lambda_\gamma-\lambda_k}+\frac{\lambda_\gamma^2(\lambda_k+\lambda_\gamma)}{\lambda_\gamma-\lambda_k}         \right]h_{rrk}^2\nonumber\\
    +\sum_{k>j,k,j\neq r}2\lambda_\gamma\left[  \frac{1+\lambda_k^2}{\lambda_\gamma-\lambda_k}+\frac{1+\lambda_j^2}{\lambda_\gamma-\lambda_j} +(\lambda_j+\lambda_k) \right]h^2_{kjr}\nonumber\\
    +\frac{\lambda_\gamma}{1+\lambda_\gamma^2}\psi_{\gamma\gamma}- \sum_{a=1}^n\lambda_ag^{aa}\psi_a\partial_{a}\ln\sqrt{1+\lambda_\gamma^2}. \label{cc}
\end{align}
\end{proposition}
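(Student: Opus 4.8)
The two identities follow from a direct computation carried out at $x_0$ in the frame diagonalizing $D^2u$; to lighten notation I write $r$ for the index $\gamma$ of the distinguished simple eigenvalue and set $b_r:=\ln\sqrt{1+\lambda_r^2}=\tfrac12\ln(1+\lambda_r^2)$. Two classical facts drive the argument: the first and second variation of a simple eigenvalue, $\partial_i\lambda_r=u_{rri}$ and $\partial_{ii}\lambda_r=u_{rrii}+2\sum_{k\neq r}\frac{u_{rki}^2}{\lambda_r-\lambda_k}$ at $x_0$ --- valid precisely because $\lambda_r$ is assumed distinct from every other eigenvalue --- together with the differentiated equation (\ref{linearize}) and the first-derivative-of-the-metric formula (\ref{111}). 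For the Laplacian I will use the expression for $\Delta_g$ at a diagonalizing point read off from (\ref{2!})--(\ref{lolz}), namely $\Delta_g=\sum_i g^{ii}\partial_{ii}-\sum_i\lambda_i g^{ii}\psi_i\partial_i$; its first-order part applied to $b_r$ is already the last term of (\ref{cc}), so all the content lies in the second-order part.

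The gradient identity (\ref{grad}) is immediate: $\partial_i b_r=\frac{\lambda_r}{1+\lambda_r^2}u_{rri}$, hence $|\nabla_g b_r|^2=\sum_i g^{ii}(\partial_i b_r)^2=\sum_i\frac{\lambda_r^2 u_{rri}^2}{(1+\lambda_r^2)^2(1+\lambda_i^2)}$, and since $h_{rri}^2=(g^{rr})^2g^{ii}u_{rri}^2=\frac{u_{rri}^2}{(1+\lambda_r^2)^2(1+\lambda_i^2)}$, this equals $\sum_i\lambda_r^2h_{rri}^2$.

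For (\ref{cc}), differentiating $b_r$ twice gives $\partial_{ii}b_r=\frac{1-\lambda_r^2}{(1+\lambda_r^2)^2}u_{rri}^2+\frac{\lambda_r}{1+\lambda_r^2}\partial_{ii}\lambda_r$. Inserting the second-variation formula for $\partial_{ii}\lambda_r$, multiplying by $g^{ii}$, and summing over $i$ breaks the second-order part of $\Delta_g b_r$ into three groups: the $\frac{1-\lambda_r^2}{(1+\lambda_r^2)^2}u_{rri}^2$ group, the $\frac{2\lambda_r}{1+\lambda_r^2}\sum_{k\neq r}\frac{u_{rki}^2}{\lambda_r-\lambda_k}$ group, and the fourth-order piece $\frac{\lambda_r}{1+\lambda_r^2}\sum_i g^{ii}u_{rrii}$. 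To handle the last piece I differentiate (\ref{linearize}) once more, with $j=r$, in the $x_r$-direction; using $\partial_r g^{ab}=-g^{aa}g^{bb}(\lambda_a+\lambda_b)u_{abr}$ at $x_0$ (a consequence of (\ref{111})) and the symmetry of fourth partials, this yields
\[
\sum_i g^{ii}u_{rrii}=\psi_{rr}+\sum_{a,b}g^{aa}g^{bb}(\lambda_a+\lambda_b)u_{abr}^2,
\]
which supplies the $\frac{\lambda_r}{1+\lambda_r^2}\psi_{rr}$ term of (\ref{cc}) together with a further batch of third-derivative terms. One then rewrites every cubic term via $g^{ii}u_{ijk}^2=(1+\lambda_j^2)(1+\lambda_k^2)h_{ijk}^2$ and sorts the resulting expressions by the combinatorial type of their index triple --- $h_{rrr}^2$; $h_{rrk}^2$ with $k\neq r$; $h_{kkr}^2$ with $k\neq r$; and $h_{kjr}^2$ with $k>j$ and $k,j\neq r$ --- collecting the coefficients that land in each bin.

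The only genuine obstacle is this final sorting. Each bin receives contributions from all three of the groups above, with symmetrization factors of $2$ arising from ordered-versus-unordered index pairs in $\sum_{a,b}$ and $\sum_{k\neq r}$, and one must verify that the algebraic sum in each bin collapses to the coefficient written in (\ref{cc}): for example, the $h_{rrr}^2$ contributions $1-\lambda_r^2$ and $2\lambda_r^2$ add to $1+\lambda_r^2$, while for $h_{kkr}^2$ the pieces $\frac{2\lambda_r(1+\lambda_k^2)}{\lambda_r-\lambda_k}$ and $2\lambda_r\lambda_k$ combine to $\frac{2\lambda_r}{\lambda_r-\lambda_k}+\frac{2\lambda_r^2\lambda_k}{\lambda_r-\lambda_k}$. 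A convenient check is that every cubic term produced carries at least one index equal to $r$, which is why no $h_{ijk}^2$ with $i,j,k$ all distinct from $r$ appears in (\ref{cc}).
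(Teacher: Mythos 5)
Your proposal is correct and follows essentially the same route as the paper: the eigenvalue perturbation formulas, the expression for $\Delta_g$ at a diagonalizing point, substitution for $\sum_i g^{ii}u_{rrii}$ by differentiating (\ref{linearize}) a second time, and a final regrouping of the cubic terms by index type. The coefficient merges you spot-check (e.g.\ $(1-\lambda_r^2)+2\lambda_r^2=1+\lambda_r^2$ for $h_{rrr}^2$) are exactly the simplifications the paper performs.
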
 
\begin{proof}
Define 
\[b_\gamma=\ln\sqrt{1+\lambda_\gamma^2}.
\] We assume $\gamma=1$ for the sake of simplifying notation. On implicitly differentiating the characteristic equation 
\[\det(D^2u-\lambda_1I)=0
\] near any point where $\lambda_1$ is distinct from the other eigenvalues we get 
\begin{align*}
    \partial_e \lambda_1=\partial_e u_{11}\\
    \partial_{ee}\lambda_1=\partial_{ee}u_{11}+\sum_{k>1}2\frac{(\partial_eu_{ik})^2}{\lambda_1-\lambda_k}
\end{align*} where $e$ is any arbitrary unit vector in $\mathbb{R}^n$.
On computing the derivatives of the smooth function $b_1$ near $x_0$, we get
\[|\nabla_g b_1|^2=\sum_{k=1}^n g^{kk}\left[\frac{\lambda_1}{1+\lambda^2}\partial_ku_{11} \right]^2=\sum_{k=1}^n\lambda_1^2h_{11k}^2.
\]
We see that 
\begin{align*}
    \partial_{ee}b_1=\partial_{ee}\ln\sqrt{1+\lambda_1^2}=\frac{\lambda_1}{1+\lambda_1^2}\partial_{ee}\lambda_1+\frac{1-\lambda_1^2}{(1+\lambda_1^2)^2}(\partial_e\lambda_1)^2.\\
   \text{ At $x_0$, } \partial_{ee}b_1=\frac{\lambda_1}{1+\lambda_1^2}[\partial_{ee}u_{11}+\sum_{k>1}2\frac{(\partial_eu_{ik})^2}{\lambda_1-\lambda_k}]+\frac{1-\lambda_1^2}{(1+\lambda_1^2)^2}(\partial_eu_{11})^2 .
\end{align*}
We define an operator $L=\sum_{a,b=1}^{n}g^{ab}\partial_{ab}.$ At $x_0$, we have
\begin{align}
    Lb_1=\sum_{r=1}^{n}g^{rr}\partial_{rr}b_1\label{label}\\
    =\sum_{r=1}^{n}g^{rr}\frac{\lambda_1}{1+\lambda_1^2}\left[\partial_{rr}u_{11}+2\sum_{k>1}\frac{u_{1kr}^2}{\lambda_1-\lambda_k}    \right]\nonumber\\
    +\sum_{r=1}^n\frac{1-\lambda_1^2}{(1+\lambda_1^2)^2}g^{rr}u_{11r}^2. \label{bbb22}
    \end{align}
    Combining (\ref{linearize}) with (\ref{111}) we observe the following at $x_0$ for $i,j$ fixed 
\begin{align}
    Lu_{ij}=\sum_{a,b=1}^{n}g^{ab}u_{ijab}=\psi_{ij}-g^{ab}_i u_{abj}\nonumber\\
    =\psi_{ij}+\sum g^{aa}g^{bb}(\lambda_a+\lambda_b)u_{abi}u_{abj}. \label{u}
\end{align}

Next in (\ref{u}) we substitute $\partial_{rr} u_{11}$ in terms of lower order derivatives and $\psi$. Recalling (\ref{2!}), at $x_0$ we have 
\begin{equation}
    \Delta_g = \sum_{i=1}^n g^{ii}\partial_{ii} -
     \sum_{i=1}^ng^{ii}\lambda_i\psi_i\partial_{i}. \label{ho}
\end{equation}
We set $i=j=1$, and plug (\ref{u}) in (\ref{bbb22}) and then regroup the terms $h_{**1}, h_{11*}, h_{*@1}$ to get 
\begin{align}
    \Delta_g b_1=Lb_1- \sum_{a=1}^n\lambda_ag^{aa}\psi_a\partial_{a}\ln\sqrt{1+\lambda_1^2}=\frac{\lambda_1}{1+\lambda_1^2}\psi_{11}+\\
    (1-\lambda_1^2)h_{111}^2+2\sum_{a=1}^{n}\lambda_1\lambda_ah^2_{aa1}+2\sum_{k>1}\frac{\lambda_1(1+\lambda_k^2)}{\lambda_1-\lambda_k}h_{kk1}^2\\
    +2\sum_{k>1}\lambda_1(\lambda_1+\lambda_k)h_{k11}^2+\sum_{k>1}(1-\lambda_1^2)h_{11k}+
    2\sum_{k>1}\frac{\lambda_1(1+\lambda_k^2)}{\lambda_1-\lambda_k}h_{1k1}^2\\
    +2\sum_{k>j>1}\lambda_1(\lambda_j+\lambda_k)h_{jk1}^2+ 2\sum_{j\neq k,j,k>1}\frac{\lambda_1(1+\lambda_k^2)}{\lambda_1-\lambda_k}h_{jk1}^2- \sum_{a=1}^n\lambda_ag^{aa}\psi_a\partial_{a}\ln\sqrt{1+\lambda_1^2}.
\end{align}
On simplifying we get (\ref{cc}).
\end{proof}

\begin{lemma}\label{4.2}
Let $u$ be a smooth solution of (\ref{s}) in $\mathbb{R}^{n}$. Suppose that the Hessian $D^{2}u$ is diagonalized at $x_0$ and that the ordered eigenvalues $\lambda_1\geq\lambda_2\geq...\geq\lambda_n$ of the Hessian satisfy $\lambda_1=...=\lambda_m>\lambda_{m+1}$ at $x_0$. Then the function $b_m=\frac{1}{m}\sum_{i=1}^m\ln\sqrt{1+\lambda_i^2}$ is smooth near $x_0$ and at $x_0$ it satisfies 
\begin{equation}\Delta_g b_m\geq c(n)|\nabla_gb_m|^2-C\label{J}
\end{equation}
where $C=C(||\psi||_{C^{1,1}(B_1)},\delta,n)$.

\end{lemma}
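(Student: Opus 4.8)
The plan is to derive the Jacobi inequality \eqref{J} for $b_m$ from the formula \eqref{cc} for $b_\gamma = \ln\sqrt{1+\lambda_\gamma^2}$ applied to each of the repeated top eigenvalues $\gamma = 1,\dots,m$, keeping track of the fact that at $x_0$ we have $\lambda_1 = \dots = \lambda_m > \lambda_{m+1}$ and $\lambda_{\max} = \lambda_1 > 0$ large (if $\lambda_1$ is bounded there is nothing to prove, since then $|D^2u|$ is controlled via $\psi$ in the supercritical range). First I would address smoothness: although individual $\lambda_i$ need not be smooth at a point of multiplicity, the symmetric combination $b_m$ equals $\frac1{2m}\ln\det\bigl(\text{restriction of }I+(D^2u)^2\text{ to the top eigenspace}\bigr)$ up to lower-order corrections, or more cleanly one uses that $\sum_{i=1}^m \ln(1+\lambda_i^2)$ is a smooth symmetric function of the eigenvalues on the open set where $\lambda_m > \lambda_{m+1}$; hence $b_m$ is smooth near $x_0$ and \eqref{cc} can be summed over $\gamma \le m$ and averaged.

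Next I would do the main estimate. Averaging \eqref{cc} over $\gamma=1,\dots,m$ and using $\lambda_1=\dots=\lambda_m$ at $x_0$, the terms split into: (a) the pure third-derivative terms $h_{rrr}^2$, $h_{kkr}^2$, $h_{rrk}^2$, $h_{kjr}^2$ with $r\le m$, which I must bound below by $c(n)\sum_k \lambda_1^2 h_{rrk}^2 = c(n)|\nabla_g b_m|^2$ using \eqref{grad}; (b) the Hessian-of-$\psi$ term $\frac{\lambda_\gamma}{1+\lambda_\gamma^2}\psi_{\gamma\gamma}$, which is bounded by $\|\psi\|_{C^{1,1}}$ since $\frac{|\lambda_\gamma|}{1+\lambda_\gamma^2}\le \frac12$; and (c) the first-order term $-\sum_a \lambda_a g^{aa}\psi_a \partial_a \ln\sqrt{1+\lambda_\gamma^2} = -\langle \text{something}, \nabla_g b_m\rangle$-type expression, which I absorb into $c(n)|\nabla_g b_m|^2$ plus a constant via Cauchy–Schwarz, using that $|\lambda_a g^{aa}| \le \frac12$ and $|\psi_a|\le \|\psi\|_{C^1}$, so this term is $\ge -\e |\nabla_g b_m|^2 - C_\e\|\psi\|_{C^1}^2$. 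The genuinely delicate part is (a): I need the same algebraic fact that drives the constant-phase Jacobi inequality of Wang–Yuan, namely that when $\lambda_1$ is the largest eigenvalue and the phase is supercritical (so $\lambda_1 + (n-1)\lambda_n \ge 0$ and $\sigma_k \ge 0$ by Lemma \ref{y1}), the coefficients of the "bad" semi-negative third-derivative terms — principally the $(1+\lambda_1^2)h_{111}^2$ and $\sum_{k\ne r}[\,\cdots\,]h_{rrk}^2$ terms — are controlled so that the quadratic form in $(h_{ijk})$ appearing in $\Delta_g b_m$ dominates a definite multiple of $\sum_k \lambda_1^2 h_{11k}^2$. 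The cross terms $h_{kkr}$ and $h_{kjr}$ with $k,j > m$ have coefficients of order $\lambda_1$ with sign controlled by $\lambda_1 - \lambda_k > 0$, and the multiplicity hypothesis kills the dangerous $\frac{\lambda_1}{\lambda_1-\lambda_k}$ blow-up for $k \le m$ because such terms simply don't appear (all repeated indices are treated together in $b_m$).

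I would organize (a) by grouping, for each fixed $r \le m$: the "diagonal" contribution from $h_{rr r}^2$ and $h_{rr k}^2$ (whose coefficients, after using $\lambda_r = \lambda_1$ and $\lambda_1 - \lambda_k > 0$ for $k > m$, are bounded below by $c(n)\lambda_1^2$ for the relevant terms and only boundedly negative otherwise), and the "off-diagonal" $h_{kkr}^2$, $h_{kjr}^2$ contributions, where I use the supercriticality $\lambda_1 \ge |\lambda_k|$ for $k\ge 2$ (Lemma \ref{y1}(1)) to show $\frac{1+\lambda_k^2}{\lambda_1 - \lambda_k} + (\lambda_j+\lambda_k)$-type coefficients cannot overwhelm the positive $h_{rrk}^2$ terms; this is exactly the computation in \cite{WaY} for the constant-phase case, now carried out for the variable phase with the $\psi$-terms peeled off as in (b)–(c). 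The main obstacle is thus verifying that this quadratic-form positivity survives the passage from constant to variable phase — but since every $\psi$-dependent term in \eqref{cc} is either zeroth order in $D^3u$ (hence bounded by $C$) or first order (hence absorbable), the third-order quadratic form is *identical* to the constant-phase one, and the Wang–Yuan argument applies verbatim. Choosing $\e$ small in step (c) and collecting constants yields \eqref{J} with $c(n)$ dimensional and $C = C(\|\psi\|_{C^{1,1}(B_1)}, \delta, n)$.
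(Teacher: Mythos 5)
Your overall architecture (smoothness of $b_m$ via symmetric functions of the eigenvalues, summing (\ref{cc}) over $\gamma\le m$, bounding the $\psi_{\gamma\gamma}$ term by $\|\psi\|_{C^{1,1}}$ since $|\lambda|/(1+\lambda^2)\le \tfrac12$, and absorbing the first-order $\psi_a$ term into $\e|\nabla_g b_m|^2+C_\e$ by Young's inequality) matches the paper's Steps 1 and 2.3. But there is a genuine gap in your part (a): the claim that ``the third-order quadratic form is identical to the constant-phase one, and the Wang--Yuan argument applies verbatim'' is not correct, because the Wang--Yuan positivity argument is not a statement about an unconstrained quadratic form in the variables $h_{ijk}$ --- it uses the minimal surface equation $\sum_i h_{iik}=0$ to eliminate $h_{nnk}$ in terms of the other diagonal entries. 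Concretely, the coefficient of $h_{nnr}^2$ ($r\le m$) in (\ref{cc}) is $\frac{2\lambda_1(1+\lambda_1\lambda_n)}{\lambda_1-\lambda_n}$, which for $\lambda_n<0$ and $\lambda_1$ large can be negative of order $-\lambda_1^2$, while $|\nabla_g b_m|^2$ by (\ref{grad}) only involves the variables $h_{rrk}$ with $r\le m$. Setting every third derivative to zero except $h_{nnr}$ therefore makes the right-hand side of your claimed quadratic-form bound $0$ while the left-hand side is arbitrarily negative. Supercriticality alone does not rescue this: $h_{nnr}^2$ and $h_{rrk}^2$ are independent variables, so ``cannot overwhelm the positive terms'' has no content without a constraint linking them.

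The missing ingredient is precisely the variable-phase replacement for the trace-free condition: by (\ref{linearize}) and (\ref{mean}), $\sum_i h_{iir}=H^r$ with $|H^r|\le C(\|\psi\|_{C^{1}})$, so one substitutes $h_{nnr}=H^r-\sum_{i<n}h_{iir}$, expands the square, uses Young's inequality to peel off a term $\frac{2\lambda_n}{\lambda-\lambda_n}\bigl(1+\tfrac{4}{\delta}\bigr)(H^r)^2\ge -C(\|\psi\|_{C^{1}})$ (using $\frac{2\lambda_n}{\lambda-\lambda_n}\ge-\tfrac2n$ from Lemma \ref{y1}), and absorbs the remaining $(1+\delta)\bigl(\sum_{i<n}h_{iir}\bigr)^2$ contribution into the positive diagonal terms by Cauchy--Schwarz; it is exactly this absorption that dictates the admissible choice of $\varepsilon(n)$. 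This is Step 2.1 of the paper's proof ((\ref{q1})--(\ref{q2})), it is where part of the additive constant $C$ in (\ref{J}) actually originates, and it is the one step that is genuinely different from the constant-phase case. Your proposal, as written, omits it.
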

\begin{proof}
\begin{itemize}
    \item [Step 1.] The function $b_m$ is symmetric in $\lambda_1,..,\lambda_m$. Thus for
$m < n$, $b_m$ is smooth in terms of the matrix entries when $\lambda_m>\lambda_{m+1}$ and it is 
still smooth in terms of $x$ after being composed with the smooth function $D^2u(x)$, in particular near $x_0$, at which $\lambda_1=...=\lambda_m>\lambda_{m+1}$. For $m= n$, $b_n$ is
clearly smooth everywhere.
First let's assume that the first $m$ eigenvalues are distinct. Using (\ref{bbb22}) from
the Proposition above, we compute $mL(b_m)$ where $L$ is the operator defined in (\ref{label}). As before after grouping the terms $h_{***},h_{**@},h_{*@!}$ in the summation, we get 
\begin{align*}
     mL(b_m)(x_0)=\sum_{k\leq m}(1+\lambda_k^2)h^2_{kkk}+(\sum_{i<k\leq m}+\sum_{k<i\leq m})(3+\lambda_i^2+2\lambda_i\lambda_k)h^2_{iik}\\
     +\sum_{k\leq m<i}\frac{2\lambda_k(1+\lambda_k\lambda_i)}{\lambda_k-\lambda_i}h^2_{iik}
     +\sum_{i\geq m<k}\frac{3\lambda_i+\lambda_k+\lambda_i^2(\lambda_i+\lambda_k)}{\lambda_i-\lambda_k}h^2_{iik}\\
     +2\Bigg[\sum_{i<j<k\leq m}(3+\lambda_i\lambda_j+\lambda_j\lambda_k+\lambda_k\lambda_i)h^2_{ijk}\\
     +\sum_{i<j\leq m<k}(1+\lambda_i\lambda_j+\lambda_j\lambda_k+\lambda_k\lambda_i+\lambda_i\frac{1+\lambda_k^2}{\lambda_i-\lambda_k}+\lambda_j\frac{1+\lambda^2_k}{\lambda_j-\lambda_k})h^2_{ijk}\\
     +\sum_{i\leq m<j<k}\lambda_i[\lambda_j+\lambda_k+\frac{1+\lambda_j^2}{\lambda_i-\lambda_j}+\frac{1+\lambda_k^2}{\lambda_j-\lambda_k}]h_{ijk}^2\Bigg]\\
     + \sum_{i=1}^m\frac{\lambda_i}{1+\lambda_i^2}\psi_{ii}.
     \end{align*}
     
     Observe that as a function of matrices, $b_m$ is $C^2$ at $D^2u(x_0)$ with eigenvalues satisfying $\lambda=\lambda_1=...=\lambda_m>\lambda_{m+1}$. Note that $D^2u(x_0)$ can be approximated by matrices with distinct eigenvalues. This shows that the above expression for $Lb_m(x_0)$ still holds good. Using Lemma \ref{y1} we can further simplify it to
\begin{align}
     mL(b_m)(x_0)= \sum_{k\leq m}(1+\lambda^2)h^2_{kkk}+(\sum_{i<k\leq m}+\sum_{k<i\leq m})(3+3\lambda^2)h_{iik}^2+\sum_{k\leq m<i}\frac{2\lambda(1+\lambda\lambda_i)}{\lambda-\lambda_i}h^2_{iik}\nonumber\\
     +\sum_{i\leq m<k}\frac{3\lambda-\lambda_k+\lambda^2(\lambda+\lambda_k)}{\lambda-\lambda_k}h_{iik}^2
     +2\Bigg[\sum_{i<j<k\leq m}(3+3\lambda^2)h_{ijk}^2
     +\sum_{i<j\leq m<k}[1+\frac{2\lambda}{\lambda-\lambda_k}+\nonumber\\
     \frac{\lambda^2(\lambda+\lambda_k)}{\lambda-\lambda_k}]h_{ijk}^2
     +\sum_{i\leq m<j<k}\lambda[\lambda_j+\lambda_k+\frac{1+\lambda_j^2}{\lambda-\lambda_j}+\frac{1+\lambda_k^2}{\lambda-\lambda_k}]h_{ijk}^2\Bigg]
     +\sum_{i=1}^{m}\frac{\lambda}{1+\lambda^2}\psi_{ii}\nonumber\\
     \geq \sum_{k\leq m}\lambda^2h_{kkk}^2+(\sum_{i<k\leq m}+\sum_{k<i\leq m})3\lambda^2h^2_{iik}+\sum_{k\leq m<i}\frac{2\lambda^2\lambda_i}{\lambda-\lambda_i}h^2_{iik}\nonumber\\
     +\sum_{i\leq m<k}\frac{\lambda^2(\lambda+\lambda_k)}{\lambda-\lambda_k}h^2_{iik}+\sum_{i=1}^m\frac{\lambda_i}{1+\lambda_i^2}\psi_{ii}.\label{star2}
 \end{align}

 Using the $C^1$ continuity of $b_m$ as a function of matrices at $D^2u(x_0)$, we can simplify (\ref{grad}) at $x_0$ to 
 \begin{equation}
     |\nabla_gb_m|^2(x_0)=\frac{1}{m^2}\sum_{1\leq k\leq n}\lambda^2\left[ \sum_{i\leq m}h_{iik} \right]^2\leq \frac{\lambda^{2}}{m}\sum_{1\leq k\leq n}\left[ \sum_{i\leq m}h^2_{iik}\right]. \label{star1}
 \end{equation}

Combining (\ref{star1}) and (\ref{star2}) we get the following at $x_0$:
\begin{align}
    m(\Delta_g b_m-\varepsilon(n)|\nabla_gb_m|^2)\geq \nonumber\\
    \lambda^2\left[\sum_{k\leq m}(1-\varepsilon)h_{kkk}^2+(\sum_{i<k\leq m} + \sum_{k<i\leq m})(3-\varepsilon)h^2_{iik}+ 2\sum_{k\leq m< i}\frac{\lambda_i}{\lambda-\lambda_i}h^2_{iik} \right] \label{y5.23}\\
    +\lambda^2\left[  \sum_{i\leq m \leq k} (\frac{\lambda+\lambda_k}{\lambda-\lambda_k}-\varepsilon)h^2_{iik}   \right] \label{y5.24}\\
    +\sum_{i=1}^m\frac{\lambda_i}{1+\lambda_i^2}\psi_{ii}\label{11}\\
    -m\sum_{i=1}^n\lambda_ig^{ii}\psi_i\partial_i b_m \label{22}
    \end{align}
with $\varepsilon(n)$ to be fixed.
\item[Step 2.] Next we estimate each of the terms of the above expression. For each fixed $k$ in the above expression, we set 
$t_i=h_{iik}.$
For the sake of simplicity, we use the following notation 
 \begin{align*}
 (\ref{y5.23}+\ref{y5.24})=Z_1+Z_2=Z\\
    H^k(x_0)=t_1(x_0)+...+t_{n-1}(x_0)+t_n(x_0)=t'(x_0)+t_{n}(x_0)
     \end{align*}
     where $H^k$ denotes the $k$th component of the mean curvature vector given by (\ref{mean}), i.e. the component of the mean curvature vector along $J(e_k,Du_{e_k})$ with $e_k$ being the $k^{th}$ eigendirection of $D^2u$.
    So far we have 
 \[m(\Delta_g b_m-\varepsilon(n)|\nabla_gb_m|^2)\geq 
    Z+\sum_{i=1}^m\frac{\lambda_i}{1+\lambda_i^2}\psi_{ii}-m\sum_{i=1}^n\lambda_ig^{ii}\psi_i\partial_i b_m.\]

 \item[Step 2.1. ]We estimate the term $Z$ by first showing that $Z_1\geq-C(||\psi||_{C^{1}(B_1)})$.
 For each fixed $k\leq m$ in (\ref{y5.23}), we show that the $[\hspace{.1cm}]_k$ term is $\geq 0$. 
 For the case where $\lambda_i\geq 0$ for all $i$, the proof follows directly. So we consider only the case where $\lambda_{n-1}>0>\lambda_n$. 
 For simplifying notation we assume $k=1$. Noting that  $t_n(x_0)=H^1(x_0)-t'(x_0)$ we observe the following:
 \begin{align}
     [\hspace{.1cm}]_1= \Bigg[ (1-\varepsilon)t_1^2+\sum_{i=2}^m(3-\varepsilon)t_i^2+\sum_{i=m+1}^{n-1}\frac{2\lambda_i}{\lambda-\lambda_i}t_i^2 \Bigg]+\frac{2\lambda_n}{\lambda-\lambda_n}t_n^2\nonumber\\
     = \Bigg[ (1-\varepsilon)t_1^2+\sum_{i=2}^m(3-\varepsilon)t_i^2+\sum_{i=m+1}^{n-1}\frac{2\lambda_i}{\lambda-\lambda_i}t_i^2   \Bigg]\nonumber\\
     +\frac{2\lambda_n}{\lambda-\lambda_n}[(H^1)^2-2H^1t'+t'^2]\nonumber\\
     \geq \Bigg[ (1-\varepsilon)t_1^2+\sum_{i=2}^m(3-\varepsilon)t_i^2+\sum_{i=m+1}^{n-1}\frac{2\lambda_i}{\lambda-\lambda_i}t_i^2   \Bigg]\nonumber\\
     +\frac{2\lambda_n}{\lambda-\lambda_n}[t'^2(1+\delta)]
     +\frac{2\lambda_n}{\lambda-\lambda_n}[(H^1)^2(1+\frac{4}{\delta})]\nonumber
     \end{align} where the last inequality follows from Young's inequality. Noting that $\frac{2\lambda_n}{\lambda-\lambda_n}\geq-\frac{2}{n}$ and (\ref{mean}), we see the following
     \begin{align}
     [\hspace{.1cm}]_1
      \geq \Bigg[ (1-\varepsilon)t_1^2+\sum_{i=2}^m(3-\varepsilon)t_i^2+\sum_{i=m+1}^{n-1}\frac{2\lambda_i}{\lambda-\lambda_i}t_i^2   \Bigg]\nonumber\\
     +\frac{2\lambda_n}{\lambda-\lambda_n}[t'^2(1+\delta)]-C(||\psi||_{C^{1}(B_1)})\nonumber\\
     \geq \Bigg[(1-\varepsilon)t_1^2+\sum_{i=2}^m(3-\varepsilon)t_i^2+\sum_{i=m+1}^{n-1}\frac{2\lambda_i}{\lambda-\lambda_i}t_i^2  \Bigg]\label{q1}\\
     \Bigg[1+\frac{2(1+\delta)\lambda_n}{\lambda-\lambda_n}\big(\frac{1}{1-\varepsilon}+\sum_{i=2}^m\frac{1}{3-\varepsilon}+\sum_{i=m+1}^{n-1}\frac{\lambda-\lambda_i}{2\lambda_i} \big) \Bigg]\label{q2}\\
     -C(||\psi||_{C^{1}(B_1)})\nonumber
 \end{align}where the last inequality follows from the Cauchy-Schwartz inequality. We see that (\ref{q1}) is positive, so now we need to choose $\varepsilon(n)$ suitably to make (\ref{q2}) positive, thereby proving $Z_1\geq -C(||\psi||_{C^1(B_1)})$.
 We have
 \begin{align}
     \Bigg[1+\frac{2(1+\delta)\lambda_n}{\lambda-\lambda_n}(\frac{1}{1-\varepsilon}+\sum_{i=2}^m\frac{1}{3-\varepsilon}+\sum_{i=m+1}^{n-1}\frac{\lambda-\lambda_i}{2\lambda_i}) \Bigg]\nonumber\\
     =\frac{2(1+\delta)\lambda_n}{\lambda-\lambda_n}\Bigg[ \frac{\lambda-\lambda_n}{2\lambda_n}-\frac{\lambda-\lambda_n}{(\frac{2}{\delta}+2)\lambda_n} +\frac{1}{1-\varepsilon}+\frac{m-1}{3-\varepsilon}
     +\frac{\lambda-\lambda_{m+1}}{2\lambda_{m+1}}+...+\frac{\lambda-\lambda_{n-1}}{2\lambda_{n-1}}\Bigg]\nonumber\\
     =\frac{2(1+\delta)\lambda_n}{\lambda-\lambda_n}\Bigg[ \frac{1}{1-\varepsilon}+\frac{m-1}{3-\varepsilon}+\frac{\lambda}{2}(\frac{1}{\lambda_1}+..+\frac{1}{\lambda_1})-\frac{n}{2} \Bigg]-\delta\nonumber\\
     =\frac{2(1+\delta)\lambda_n}{\lambda-\lambda_n}\Bigg[ \frac{1}{1-\varepsilon}+\frac{m-1}{3-\varepsilon}+\frac{\lambda}{2}\frac{\sigma_{n-1}}{\sigma_n}-\frac{n}{2}
     \Bigg]-\delta\nonumber\\
     \geq\frac{2(1+\delta)\lambda_n}{\lambda-\lambda_n}\Bigg[ \frac{1}{1-\varepsilon}+\frac{m-1}{3-\varepsilon}-\frac{n}{2}\Bigg]
     -\delta \nonumber
     \end{align}  where we used that the fact $\lambda_1=..=\lambda_m$ and Lemma \ref{y1}. Now as $\delta$ is arbitrarily small,
     and $\lambda_n<0$ we choose $\varepsilon(n)>0$ such that 
     \begin{equation*}
         \Bigg[ \frac{1}{1-\varepsilon}+\frac{m-1}{3-\varepsilon}-\frac{n}{2}\Bigg]\leq 0
     \end{equation*} which in turn makes (\ref{q2}) positive. 
     On simplifying, we see that 
     \begin{equation*}
     \varepsilon(n)\leq 2-\frac{m}{n}-\sqrt{(1-\frac{m}{n})^2+\frac{4}{n}}.
     \end{equation*}

 \item[Step 2.2] Now we estimate the term $Z_2$. For each $k$ between $m$ and $n$, we have $\lambda_k>0$, and the $[\hspace{.1cm}]_k$ in (\ref{y5.24}) satisfies 
 \begin{align*}
     [\hspace{.1cm}]_k=\sum_{i\leq m}\Bigg[\frac{\lambda+\lambda_k}{\lambda-\lambda_k}-\varepsilon \Bigg]t_i^2\\
     \geq \sum_{i\leq m}(1-\varepsilon)t_i^2\geq 0
 \end{align*}
 assuming $\varepsilon\leq 1$.\\
 For $k=n$, the $[\hspace{.1cm}]_n$ term in (\ref{y5.24}) becomes \begin{align*}
     [\hspace{.1cm}]_n=\sum_{i\leq m}\Bigg[\frac{\lambda+\lambda_k}{\lambda-\lambda_k}-\varepsilon \Bigg]t_i^2\\
     \geq \sum_{i\leq m}\Bigg[ \frac{n-2}{n}-\varepsilon\Bigg]t_i^2\geq 0
 \end{align*}
 where the last inequality follows from Lemma \ref{y1} and the assumption of $\varepsilon\leq \frac{n-2}{n}.$
 So far we have shown $Z\geq-C(||\psi||_{C^{1}(B_1)})$ for $n-1\geq m\geq 1$. When $m=n$, we have $\lambda_1=...=\lambda_n>0$ and therefore, $Z\geq-C(||\psi||_{C^{1}(B_1)})$ holds.

 \item[Step 2.3] Next we estimate the remaining terms (\ref{11}) and (\ref{22})
\begin{align*}
    m(\Delta_g b_m-\varepsilon(n)|\nabla_gb_m|^2)\geq \\
    Z+\sum_{i=1}^m\frac{\lambda_i}{1+\lambda_i^2}\psi_{ii}-m\sum_{i=1}^n\lambda_ig^{ii}\psi_i\partial_i b_m\geq\\
    \geq-C(||\psi||_{C^{1}(B_1)})+\sum_{i=1}^m\frac{\lambda_i}{1+\lambda_i^2}\psi_{ii}-\frac{\delta}{2}m^2|\nabla_g b_m|^2-\sum_{i=1}^m\frac{2}{\delta}g^{ii}\lambda^2_{i}\psi_i^2
    \end{align*}
    where the last inequality follows from Young's inequality and $Z\geq-C(||\psi||_{C^{1}(B_1)})$.
Denoting $c(n)=\varepsilon(n)+\delta/2$, we get 
\begin{align}
    \Delta_g b_m-c(n)|\nabla_gb_m|^2\nonumber\\ \geq-C(||\psi||_{C^{1}(B_1)})-|\sum_{i=1}^m\frac{\lambda_i}{1+\lambda_i^2}[\psi_{ii}-\frac{2}{\delta}\lambda_i\psi_i^2]|\nonumber\\
    \geq -C(||\psi||_{C^{1,1}(B_1)},\delta,n)=-C. \label{C}
\end{align}

\end{itemize}
\end{proof}

\subsection{The integral Jacobi inequality}
In order to prove Hessian estimates we will need the following integral form of the Jacobi inequality (\ref{J}).
\begin{proposition}
Let $u$ be a smooth solution of (\ref{s}) on $B_R(0)\subset\mathbb{R}^{n}$ with 
$\psi\geq (n-2)\frac{\pi}{2}+\delta$. Let
\begin{equation}
b=\ln\sqrt{1+\lambda_{\max}^2} \label{star}
\end{equation} where 
 $\lambda_{\max}$ is the largest eigenvalue of $D^2u$, namely, $\lambda_{\max}=\lambda_1\geq\lambda_2\geq...\geq\lambda_n$. Then for all non-negative $\phi\in C^{\infty}_0(B_R)$, $b$ satisfies the integral Jacobi inequality 
\begin{equation}
    \int_{B_R}-\langle \nabla_g \phi,\nabla_g b\rangle_g dv_g\geq c(n)\int_{B_R}\phi|\nabla_gb|^2dv_g-\int_{B_R}C\phi dv_g \label{IJ}
\end{equation}
where $C$ and $c(n)$ are the constants from (\ref{J}).
\end{proposition}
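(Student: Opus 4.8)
The plan is to upgrade the pointwise Jacobi inequality of Lemma \ref{4.2} to its integral form (\ref{IJ}) by a barrier argument; the difficulty is that $b=\ln\sqrt{1+\lambda_{\max}^2}$ is only Lipschitz — it fails to be smooth wherever $\lambda_{\max}$ has multiplicity greater than one — whereas Lemma \ref{4.2} delivers the inequality only for the smooth functions $b_m$. So first I would record the regularity of $b$. Since $\lambda_{\max}(\cdot)$ is a convex function on symmetric matrices, $D^2u$ is smooth, and $t\mapsto\ln\sqrt{1+t^2}$ is smooth with bounded derivatives on the relevant range $t=\lambda_1>0$ dictated by Lemma \ref{y1}, the function $b$ is locally Lipschitz and, more precisely, semiconvex on compact subsets of $B_R$: some $b+K_0|x|^2/2$ is convex locally. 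Consequently $b\in W^{1,\infty}_{\mathrm{loc}}(B_R)$, so both sides of (\ref{IJ}) are finite; $b$ is twice differentiable almost everywhere by Alexandrov's theorem; and the distributional Hessian $D^2b$ is a matrix-valued Radon measure whose singular part is nonnegative.

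Next I would show that $\Delta_g b\ge c(n)|\nabla_g b|^2-C$ holds at almost every point in the classical sense, via the a.e. second derivatives. Fix a point $x_0$ of twice differentiability of $b$, diagonalize $D^2u$ there, and let $m$ be the multiplicity of $\lambda_{\max}=\lambda_1$, so $\lambda_1=\dots=\lambda_m>\lambda_{m+1}$ at $x_0$. By Lemma \ref{4.2}, $b_m$ is smooth near $x_0$ and $\Delta_g b_m(x_0)\ge c(n)|\nabla_g b_m(x_0)|^2-C$. Moreover $b_m\le b$ in a neighborhood of $x_0$ with equality at $x_0$: this is exactly the elementary fact that $\lambda_1^2\ge\lambda_i^2$ for every $i$, which holds in the supercritical regime by Lemma \ref{y1}(1). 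Hence $b-b_m$ has a local minimum at $x_0$; comparing the second-order Taylor expansions of $b$ and $b_m$ at $x_0$ gives $\nabla b(x_0)=\nabla b_m(x_0)$ and $D^2b(x_0)\ge D^2b_m(x_0)$. Since $(g^{ij})$ is positive definite and the drift coefficient of $\Delta_g$ in (\ref{2!}) depends only on $u$ and $\psi$, this yields $\Delta_g b(x_0)\ge\Delta_g b_m(x_0)\ge c(n)|\nabla_g b_m(x_0)|^2-C=c(n)|\nabla_g b(x_0)|^2-C$, all quantities read off the a.e. derivatives of $b$.

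To conclude, I would integrate: for $0\le\phi\in C_0^\infty(B_R)$, mollify $b$, integrate by parts against $\phi$ using the divergence form of $\Delta_g$, and send the mollification parameter to $0$. The terms not involving second derivatives of $b$ converge by the Lipschitz bound, while the second-order term converges to the pairing of the nonnegative, continuous, compactly supported function $\phi\sqrt g\,g^{ij}$ against the measure $D^2b$; discarding the nonnegative contribution of its singular part leaves, by the previous step,
\[
\int_{B_R}-\langle\nabla_g\phi,\nabla_g b\rangle_g\,dv_g\ \ge\ \int_{B_R}\phi\,\big(c(n)|\nabla_g b|^2-C\big)\,dv_g,
\]
which is (\ref{IJ}). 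The main obstacle is this regularity bookkeeping — the semiconvexity of $b$, the a.e. Hessian comparison with the barrier $b_m$, and the sign of the singular part — rather than any new estimate; the analytic content was already packaged into Lemma \ref{4.2}, and the supercritical hypothesis enters here only through Lemma \ref{y1}, which is precisely what makes $b_m$ touch $b$ from below.
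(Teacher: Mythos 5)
Your argument is correct, and it reaches (\ref{IJ}) by a genuinely different route from the paper. The paper's proof shows that the pointwise inequality (\ref{J}) holds for $b$ in the \emph{viscosity} sense: it takes a quadratic $P$ touching $b$ from above at $x_0$, observes (exactly as you do) that $b_k\le b$ near $x_0$ with equality at $x_0$ when $\lambda_{\max}$ has multiplicity $k$ there, so that $P$ also touches the smooth barrier $b_k$ from above, applies Lemma \ref{4.2} to $b_k$, and then invokes Ishii's theorem that a viscosity subsolution of such an equation is a distributional subsolution; the Lipschitz regularity of $b$ then permits the integration by parts. You instead upgrade Lipschitz to semiconvexity of $b$ (composition of the convex, Lipschitz $\lambda_{\max}$ with the smooth $D^2u$, followed by a smooth increasing function of $t=\lambda_{\max}>0$ — the monotonicity, guaranteed by Lemma \ref{y1}(1), is what makes this composition preserve semiconvexity), and then run Alexandrov's theorem plus the sign of the singular part of the matrix measure $D^2b$ in place of Ishii. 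The two proofs share the essential geometric input — $b_m$ touching $b$ from below at multiplicity points, which converts Lemma \ref{4.2} into information about $b$ itself — and differ only in the machinery used to pass from an a.e./barrier statement to the distributional inequality: yours is self-contained modulo standard facts about semiconvex functions (Alexandrov points, nonnegativity of the singular part of the second-derivative measure, and that pairing a positive semidefinite matrix against a nonnegative matrix measure yields a nonnegative measure), while the paper's is shorter because it cites Ishii as a black box. One small point worth making explicit in your write-up: the second-order touching at an Alexandrov point gives $D^2b(x_0)\ge D^2b_m(x_0)$ only after you have first forced $Db(x_0)=Db_m(x_0)$ from the vanishing of the first-order term in the Taylor comparison; you do state this, and it is needed both for the drift term of $\Delta_g$ and to identify $|\nabla_g b|^2$ with $|\nabla_g b_m|^2$ on the right-hand side.
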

\begin{proof}
If $b_1=\ln\sqrt{1+\lambda_{\max}^2} $ is smooth everywhere, then the pointwise Jacobi inequality (\ref{J}) implies the integral Jacobi inequality (\ref{IJ}). It is
easy to see that $\lambda_{max}$ is always a Lipschitz function of the entries of the Hessian of $u$.
Since $u$ is smooth in $x$, $b$ is Lipschitz in terms of $x$. We now show that (\ref{J}) holds in the viscosity sense. \\
Let $x_0\in B_R(0)$ and let $P(x)$ be a quadratic polynomial such that
\[P(x)\geq b(x) \text{ with equality holding at $x_0$}.
\]
Now if $x_0$ is a smooth point of $b$, then from (\ref{J}), we see that at $x_0$, with $m=1$, the following holds
\[\Delta_g P\geq c(n)|\nabla_gP|^2-C(||\psi||_{C^{1,1}(B_1)},\delta,n).
\]
Or else we would have $m>1$, i.e. $\lambda_1$ is not distinct at $x_0$. Let's suppose that we have 
$\lambda_1=...=\lambda_k>\lambda_{k+1}$ at $x_0$. Consider the function $b_k=\frac{1}{k}\sum_{i=1}^k \ln \sqrt{1+\lambda_i^2}$. Note that this is smooth near $x_0$ from Lemma \ref{4.2}. Observe that since 
$b(x)\geq b_k(x)$ with equality holding at $x_0$, we must have 
\[P(x)\geq b_k(x) \text{ with equality holding at $x_0$}.
\]
So now applying the pointwise Jacobi inequality (\ref{J}) to  $b_k$, we see the following holds at $x_0$
\[\Delta_g P\geq c(n)|\nabla_gP|^2-C(||\psi||_{C^{1,1}(B_1)},\delta,n).
\]  So far we have shown that (\ref{J}) holds in the viscosity sense. Applying Ishii's result \cite[Theorem 1]{Ish} we see that the viscosity subsolution $b$ of (\ref{J}) should also be a distribution subsolution. Now since $b$ is a Lipschitz function we perform integration by parts on the first term to get
\[\int_{B_R}\Delta_g b \phi dv_g=\int_{B_R}-\langle \nabla_g \phi,\nabla_g b\rangle_g dv_g.
\]
On combining \cite[Theorem 1]{Ish} and the above equation, we get $(\ref{IJ})$.

 \end{proof}

\section{Hessian Estimates}

\subsection{Mean Value Inequality}
In \cite[Theorem 3.4]{MS}, Michael-Simon established the mean value inequality for a non-negative subharmonic function on a $m$-dimensional submanifold $M$ of $\mathbb{R}^n$. In order to prove the Hessian estimates, having the following mean value inequality for a variable Lagrangian phase is crucial. For completeness, we include a proof here.
\begin{proposition}
Let $M\subset\mathbb{R}^{n}$ be a $m$-dimensional submanifold, $0\in M$, and $s>0$ satisfies $B_s(0)\cap\partial M=\emptyset$. Suppose that there exists $\Lambda_0>0$ such that $|\vec{H}|\leq \Lambda_0$ where $\vec{H}$ is the mean curvature vector of $M$. If $f$ is a non-negative function on $M$ such that $\Delta_{g}f\geq-\beta f$, then  
\begin{equation}
    f(0)\leq C(\beta,n,\Lambda_0)\frac{\int_{B_1\cap M}f dv_g}{Vol(B_1\subset\mathbb{R}^{n})}. \label{mvi}
\end{equation}
\end{proposition}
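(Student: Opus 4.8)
The plan is to prove a monotonicity formula for the weighted mass $\mu(r):=\int_{B_r(0)\cap M}f\,dv_g$, $0<r<s$, and then let $r\to0$, in the spirit of Michael--Simon \cite{MS}. In the intended application $f=b=\ln\sqrt{1+\lambda_{\max}^{2}}$ is merely Lipschitz and $\Delta_g f\ge-\beta f$ holds only in the viscosity, hence --- by Ishii's theorem, exactly as invoked before \eqref{IJ} --- the distributional sense; accordingly every integration by parts below is to be read weakly, i.e.\ the inequality is tested only against nonnegative Lipschitz functions of radial cutoff type $\gamma(|x|/r)$.

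The ingredients beyond the classical minimal, $f\equiv1$ case are: (i) the first variation formula on $M$, $\int_M\operatorname{div}_M X\,dv_g=-\int_M\langle X,\vec H\rangle\,dv_g$ for $X$ supported in $B_s(0)$ (the boundary term vanishing since $B_s(0)\cap\partial M=\emptyset$), together with $\operatorname{div}_M x=m$ and the coarea identity $\tfrac{d}{dr}\mu(r)=\int_{\partial B_r\cap M} f\,\lvert\nabla_M|x|\rvert^{-1}\,d\mathcal H^{m-1}$; and (ii) the subsolution inequality. Applying (i) to $X=f(x)\,\gamma(|x|/r)\,x$, testing $\Delta_g f\ge-\beta f$ against $\gamma(|x|/r)\ge0$, combining the two relations and differentiating in $r$, I expect to arrive, for a.e.\ $0<r<\min(s,1)$, at a differential inequality of the form
\[
\frac{d}{dr}\!\left(\frac{\mu(r)}{r^{m}}\right)\ \ge\ \frac{d}{dr}N(r)\ -\ C\,(\Lambda_0+\beta)\,\frac{\mu(r)}{r^{m}},
\]
where $N(r)=\int_{B_r\cap M}|x^{N}|^{2}|x|^{-m-2}f\,dv_g$ is nondecreasing (as $f\ge0$, $|x^{N}|\le|x|$), the term $C\Lambda_0\mu(r)/r^{m}$ absorbs the mean-curvature contribution, estimated by $\int_{B_r\cap M}|x|\,|\vec H|\,f\le\Lambda_0 r\,\mu(r)$, and $C\beta\mu(r)/r^{m}$ absorbs the contribution of $\beta$ from the tested subsolution inequality. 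The essential point is that both error terms scale precisely as $r^{-m}\mu(r)$.

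Replacing $\tfrac{d}{dr}N(r)$ by its lower bound $0$, the estimate becomes $\tfrac{d}{dr}\bigl(e^{C(\Lambda_0+\beta)r}\mu(r)/r^{m}\bigr)\ge0$, so $r\mapsto e^{C(\Lambda_0+\beta)r}\mu(r)/r^{m}$ is nondecreasing on $(0,1]$ (assume $s\ge1$, e.g.\ after rescaling to unit scale; otherwise integrate only up to $r=s$). Integrating from $\sigma$ to $1$ and letting $\sigma\to0$, the continuity of $f$ at $0$ gives $\lim_{\sigma\to0}\mu(\sigma)/\sigma^{m}=\omega_m f(0)$ with $\omega_m=\mathrm{Vol}(B_1\subset\mathbb R^{m})$, whence
\[
\omega_m f(0)\ \le\ e^{C(\Lambda_0+\beta)}\int_{B_1\cap M}f\,dv_g ,
\]
which is \eqref{mvi} once $\omega_m$ (a dimensional constant comparable to $\mathrm{Vol}(B_1\subset\mathbb R^{n})$) and the exponential factor are absorbed into $C(\beta,n,\Lambda_0)$.

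I expect the main obstacle to lie entirely in the bookkeeping of the monotonicity derivation: one must carry the cutoff $\gamma(|x|/r)$ through the first variation and the tested subsolution inequality and organize the resulting terms so that every error is controlled by $r^{-m}\mu(r)$ --- any worse scaling would destroy the integrating-factor step near $r=0$ --- while simultaneously justifying the coarea differentiation and the weak integrations by parts for the only-Lipschitz $f$. That last point is precisely where the distributional reformulation established before \eqref{IJ}, together with the Lipschitz regularity of $f$, is used.
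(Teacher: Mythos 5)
Your proposal is correct and follows essentially the same route as the paper: a Michael--Simon type monotonicity formula for $r\mapsto \mu(r)/r^{m}$ in which both the mean-curvature contribution (bounded via $|\vec H|\le\Lambda_0$) and the $\beta f$ contribution from the tested subsolution inequality scale as $r^{-m}\mu(r)$, so an integrating factor $e^{C(\Lambda_0+\beta)r}$ restores monotonicity and letting $r\to 0$ yields \eqref{mvi}. The paper implements the same computation with a smoothed radial cutoff $\phi(y-r)$ and the test function $\gamma(r)=\int_r^{\infty}t\phi(y-t)\,dt$ rather than your sharp cutoff plus coarea differentiation, but the argument is the same.
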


\begin{proof}
The symmetric matrix $\tilde{g}^{ij}(x)$ denotes the projection of $\mathbb{R}^{n}$ onto the $m$-dimensional submanifold $M$, and it satisfies 
\begin{align}
    \sum_{i=1}^{n}\tilde{g}^{ii}(x)=m \label{gec}\\
    0\leq\sum_{i,j=1}^{n}\tilde{g}^{ij}(x)x_ix_j\leq|x|^2 \hspace{.2cm}\forall x\in \mathbb{R}^{n} \nonumber.
\end{align} Let $U$ denote an open subset of $\mathbb{R}^{n}$ that contains $M$.
Let $\phi$ be a non decreasing $C^1(\mathbb{R})$ function such that $\phi(t)=0$ when $t\leq 0$. For each $x_0\in M$, we define the following two functions 
\begin{align*}
    g_0(y)=\int_M f(x)\phi(y-r)dv_g(x)\\
    h_0(y)=\int_Mf(x)|H(x)|\phi(y-r)dv_g(x)
\end{align*}
where $r=|x-x_0|$. Let's assume $d(x_0,\partial U)=1.$
\begin{claim}
For $0<y\leq 1$, we prove that
\begin{equation}
    -\frac{d}{dy}\left[ \frac{\ g_0(y)}{y^m}\right]\leq y^{-m-1}\int_{0}^{y}t  h'_0(t)dt. \label{main claim}
\end{equation}
\end{claim}
Let $\gamma$ be a real valued function defined by
\[\gamma(s)=\int_s^{\infty}t\phi(y-t)dt.
\]
Then $\gamma(s)=0$ when $s\geq y$. Note that $\gamma(|x-x_0|)$ is in $C^2(U)$ and vanishes outside a compact subset of $U$ if $y<1$. So for $y<1$ we can use $\gamma(r)$ as a test function. Next we observe that 
\begin{align*}
\Delta_g \gamma(r)=-\left[\phi(y-r)\sum_{i=1}^{n}\tilde{g}^{ii}-r\phi'(y-r)\sum_{i,j=1}^{n}\tilde{g}^{ij}(\frac{x_i-(x_0)_i}{r})(\frac{x_j-(x_0)_j}{r})         \right]-\\
\phi(y-r)\sum_{i=1}^{n}(x_i-(x_0)_i)H_i
\end{align*} where $H_i$'s denote the components of the mean curvature vector. So then by our assumption $\Delta_{g}f\geq-\beta f$ and (\ref{gec}) we have 
\begin{align*}
mg_0(y)-\int_M fr\phi'(y-r)dv_g(x)\leq \int_M f|H|r\phi(y-r)dv_g+\int_M \beta f\gamma(r)dv_g\\
\implies mg_0(y) -\int_M fr\phi'(y-r)dv_g(x)\leq \int_M f|H|r\phi(y-r)dv_g+\int_M \beta f y\phi(y-r) dv_g.
\end{align*}
In the last inequality we used $\gamma(r)\leq \int _{r}^1 t\phi(y-t)dt\leq y\phi(y-r)$ since we need $t\leq y$ for the function to be non-zero.
This gives us
\begin{equation}
mg_0(y)-\int_M fr\phi'(y-r)dv_g(x)\leq \int_M f|H|r\phi(y-r)dv_g+\int_M \beta f y\phi(y-r) dv_g. \label{imp1}
\end{equation}
Using the inequality $r\phi'(y-r)\leq y\phi'(y-r)$ we get
\[\int_M fr\phi'(y-r)dv_g\leq yg_0'(y).
\]
We see that 
\begin{align*}
   \int_M f|H|r\phi(y-r)dv_g =\int_M f|H|\left[ \int_0^y r\phi'(t-r)dt \right]dv_g \\
    \leq \int_M f|H|\left[ \int_0^y t\phi'(t-r)dt \right]dv_g\\
    =\int_0^y th_0'(t)dt.
\end{align*}

Therefore, (\ref{imp1}) reduces to 
\[mg_0(y)-\int_M fr\phi'(y-r)dv_g(x)\leq \int_0^y th_0'(t)dt+\int_M \beta f y\phi(y-r) dv_g
\] which can be written as
\[-\frac{d}{dy}\left[ \frac{g_0(y)}{y^m}\right]\leq y^{-m-1}\int_0^yth_0'(t)dt+y^{-m}
\int_M \beta f \phi(y-r) dv_g.
\]
Using the fact
\[\int_0^yth_0'(t)dt\leq y\int_0^yh'_0(t)dt=yh_0(y)
\]
we get
\[-\frac{d}{dy}\left[ \frac{g_0(y)}{y^m}\right]\leq \frac{s_0(y)}{y^m}.
\]
Observe that
\begin{align*}
    s_0=\int_Mf(x)|H(x)|\phi(y-r)dv_g(x)+\int_M \beta f \phi(y-r) dv_g\\
    \leq C\int_M f\phi(y-r)dv_g(x)
\end{align*}
where $C=(\beta,n,\Lambda_0)$.
This shows
\[-\frac{d}{dy}\left[ \frac{g_0(y)}{y^m}\right]\leq C\left[ \frac{g_0(y)}{y^m}\right].
\]

Integrating this we get 
\[\sup_{t\in (0,y)}\left[ \frac{g_0(t)}{t^m}\right]\leq e^{Cy}\left[ \frac{g_0(y)}{y^m}\right]
\] for all $y\in (0,1)$. Now we choose $\phi$ such that $\phi(s)=1$ when $s\geq \varepsilon$ and letting $\varepsilon\rightarrow 0+$, (\ref{mvi}) follows.

\end{proof}

\begin{remark} The mean value inequality for $b$ defined in (\ref{star}) also follows from the observation that the potential $u$ is semi convex, so a rotation of Yuan \cite[Pg 125]{YY02} can be performed on 
the gradient graph $(x,Du(x))$, which results in a uniformly elliptic Laplace-Beltrami operator on the rotated graph $(\bar x,D\bar u(\bar x))$. 
Then on applying the De Giorgi iteration for divergence form equations \cite[Pg 197, (8.58)]{GT} to $b$, and given the invariance of the Jacobi inequality and integral, we obtain a MVI for it. 
\end{remark}

Note that the condition $u$ is smooth in section 4 can be clearly replaced by $u\in C^4$. Now we prove our main Theorem.

\subsection{Proof of Theorem \ref{main1}}
\begin{proof} 
We first consider the case $n\neq 2$. We verify that the positive function $b$ defined in (\ref{star}) satisfies the requirements of the above mean value inequality. Observe that by our choice of $b$, we have 
\begin{equation}
    b\geq \ln\sqrt{1+\tan^2(\frac{\pi}{2}-\frac{\pi}{n})}\geq \ln \sqrt{4/3} \label{b}.
\end{equation}
Combining the above with (\ref{J}), we conclude that $b$ satisfies the conditions of the above Theorem:
\begin{align*}
    \Delta_g b\geq c(n)|\nabla_gb|^2-C
    \geq  -\frac{C}{\ln \sqrt{4/3}}b=-Cb
\end{align*}
where $C=C(n,||\psi||_{C^{1,1}(B_1)},\delta, \ln \sqrt{4/3})$ is the positive constant from (\ref{IJ}). 

For simplifying notation in the remaining proof, we assume $R=2n+1$ and $u$ is a solution on $B_{2n+1}\subset\mathbb{R}^n$. Then by scaling $v(x)=\frac{u(\frac{R}{2n+1}x)}{(\frac{R}{2n+1})^2}$, we get the estimate in Theorem \ref{main1}. Also, we will denote the above constant $C$ as $C(n,\psi,\delta)$. 
\begin{itemize}
    \item[Step 1.] We show that the function $b^{\frac{n}{n-2}}$ meets the requirements of the above MVI:
    \begin{align*}
        \int-\langle\nabla_g \phi,\nabla_g b^{\frac{n}{n-2}}\rangle_g dv_g\\
        =\int-\langle \nabla_g(\frac{n}{n-2}b^\frac{2}{n-2}\phi)-\frac{2n}{(n-2)^2}b^{\frac{4-n}{n-2}}\phi\nabla_gb,\nabla_g b     \rangle_g dv_g\\
        \geq\int (\frac{n}{n-2}c(n)b^{\frac{2}{n-2}}\phi|\nabla_g b|^2+\frac{2n}{(n-2)^2}b^{\frac{4-n}{n-2}}\phi|\nabla_g b|^2)dv_g-\int C\frac{n}{n-2}\phi b^{\frac{2}{n-2}}dv_g \\
        \geq -\int C\frac{n}{n-2}\frac{1}{b}b^{\frac{n}{n-2}} \phi dv_g \\
        \geq -C(n,\psi,\delta)\int b^{\frac{n}{n-2}} \phi dv_g
    \end{align*}
    where the last inequality follows from (\ref{b}). 

So now by the MVI applied to the Lipschitz function $b^\frac{n}{n-2}$ we get
\begin{equation}b(0)\leq C(n,\psi,\delta)(\int_{\Tilde{B_1}\cap X}b^{\frac{n}{n-2}}dv_g)^{\frac{n-2}{n}}
\leq  C(n,\psi,\delta)(\int_{B_1}b^{\frac{n}{n-2}}dv_g)^{\frac{n-2}{n}}\label{bbb}
\end{equation} 
where $X=(x,Du(x))\subset \mathbb{R}^n\times\mathbb{R}^n$ is the Lagrangian submanifold, $\Tilde{B_1}$ is the ball with radius $1$ and center at $(0,Du(0))$ in $\mathbb R^n\times \mathbb R^n$, and $B_1$ is
the ball with radius $1$ and center at $0$ in $\mathbb R^n.$
Choose a cut off $\phi\in C^\infty_0(B_2)$ such that $\phi\geq 0$, $\phi=1$ on $B_1$ and $|D\phi|<2$. That gives us 
\begin{equation}
    [\int_{B_1}b^{\frac{n}{n-2}}dv_g]^{\frac{n-2}{n}}\leq [\int_{B_2}\phi^{\frac{2n}{n-2}} b^{\frac{n}{n-2}}dv_g]^{\frac{n-2}{n}}=[\int_{B_2}(\sqrt{b}\phi) ^{\frac{2n}{n-2}} dv_g]^{\frac{n-2}{n}} \label{b1}.
\end{equation} 

We assume $\sqrt{b}\phi$ to be $C^1$ by approximation. Applying the general Sobolev inequality \cite[Theorem 2.1]{MS} to it on this Lagrangian submanifold, and using the mean curvature formula (\ref{mean}), we get 
\begin{align}
[\int_{B_2}(\sqrt{b}\phi )^{\frac{2n}{n-2}}dv_g]^{\frac{n-2}{n}}\leq C(n)[\int _{B_2}|\nabla_g(\sqrt{b}\phi )|^2dv_g+\int_{B_2}|\sqrt{b}\phi \nabla_g \psi|^2dv_g]. \label{b2}
\end{align}
Next we observe the following
\begin{align}
    |\nabla_g(\sqrt{b}\phi )|^2=|\frac{1}{2\sqrt{b}}\phi\nabla_g b+\sqrt{b}\nabla_g\phi|^2\nonumber\\
    \leq \frac{1}{2b}\phi^2|\nabla_g b|^2+2b|\nabla_g\phi|^2\nonumber\\
    \leq \phi^2|\nabla_g b|^2+2b|\nabla_g \phi|^2.\label{b3}
    \end{align}
    Combining (\ref{b1}, \ref{b2}, \ref{b3}) and plugging  into (\ref{bbb}), we see that
    \begin{align}
    b(0)\leq C(n,\psi,\delta)[\int_{B_2}\phi^2|\nabla_g b|^2 dv_g+\int_{B_2}b|\nabla_g \phi|^2dv_g
    +\int_{B_2}\phi^2b|\nabla_g \psi|^2dv_g].\label{III}
\end{align}
\item[Step 2.] 
Using the integral Jacobi inequality and recalling the constants $C$ and $c(n)$ from (\ref{J}) we get  
\begin{align}
  \int_{B_2}\phi^2|\nabla_gb|^2dv_g\leq \frac{1}{c(n)}[\int_{B_2}\phi^2\Delta_gbdv_g+\int_{B_2}\phi^2C   dv_g] \nonumber \\
    =-\frac{1}{c(n)}[\int_{B_2}\langle2\phi\nabla_g\phi,\nabla_gb\rangle dv_g+\int_{B_2}\phi^2Cdv_g] \nonumber\\
    \leq \frac{1}{2}\int_{B_2}\phi^2|\nabla_gb|^2dv_g+\frac{2}{c(n)^2}\int_{B_2}|\nabla_g\phi|^2dv_g+\frac{1}{c(n)}\int_{B_2}\phi^2Cdv_g \nonumber\\
    \implies \int_{B_2}\phi^2|\nabla_gb|^2dv_g\leq
    \frac{4}{c(n)^2}\int_{B_2}|\nabla_g\phi|^2dv_g+\frac{2}{c(n)}\int_{B_2}\phi^2Cdv_g. \label{ii}
    \end{align}
    Again using (\ref{b}) and plugging the above inequality in (\ref{III})  we get the following 
\begin{align}
b(0)\leq 
  C(n,\psi, \phi,\delta)[\int_{B_2}b\sum_{i=1}^{n}\frac{1}{1+\lambda_i^2}\sqrt{\det g}dx+\int_{B_2}\sqrt{\det g}dx
  ].\label{pp2}
\end{align}

Next we choose a suitable test function $\eta$ such that on using the Sobolev inequality we get the following estimate 
\begin{align*}
    \int_{B_{2}}\sqrt{\det g}dx\leq C(n)[\int_{B_{2}}\eta ^{\frac{2n}{n-2}}dv_g]^{\frac{n-2}{n}}\\
    \leq C(n)[\int_{B_{2}} |\nabla_g\eta|^2dv_g+\int_{B_{2}}|\eta\nabla_g\psi|^2dv_g]\\
    \leq C(n)[\int_{B_{2}} \sum_{i}^{n}\frac{1}{1+\lambda_i^2}\sqrt{\det g}dx+  \int_{B_{2}}|\eta\nabla_g\psi|^2dv_g   ].
\end{align*}
Using (\ref{b}) we get
\begin{align*}
\int_{B_{2}}\sqrt{\det g}dx\leq C(n,\psi,\eta)\int_{B_{2}}b \sum_{i}^{n}\frac{1}{1+\lambda_i^2}\sqrt{\det g}dx .
\end{align*}
On combining everything and plugging in (\ref{pp2}), we get
\begin{align}
b(0)\leq C(n,\psi,\delta)\int_{B_{2}} b\sum_{i}^{n}\frac{1}{1+\lambda_i^2}\sqrt{\det g}dx.\label{b12}
\end{align}

\item[Step 3.] Now we estimate  $\int_{B_2}b\sum_{i=1}^{n}\frac{1}{1+\lambda_i^2}\sqrt{\det g}dx$.\\
We denote $V=\sqrt{\det g}$. We see that (ref: \cite [(3.2)]{WaY}) on differentiating the complex identity 
\[\ln V+i\sum_{i=1}^n\arctan\lambda_i=\ln \bigg[ \sum_{0\leq 2k\leq n} (-1)^k\sigma_{2k} +i\sum_{1\leq2k+1\leq n}(-1)^k \sigma_{2k+1}\bigg]
\] we get
\[ \bigg( \frac{1}{1+\lambda_1^2},...,\frac{1}{1+\lambda_n^2}\bigg)V=\bigg( \frac{\partial \Sigma}{\partial \lambda_1},...,\frac{\partial \Sigma}{\partial \lambda_n}\bigg)
\]
where \begin{align*}
\sum=\cos\psi \sum_{1\leq 2k+1\leq n}(-1)^k\sigma_{2k+1}-\sin\psi \sum_{0\leq 2k\leq n}(-1)^k\sigma_{2k}
\end{align*} 
where $\psi$ is the Lagrangian phase. On taking the trace, we get
\begin{align*}
    \sum_{i=1}^n \frac{1}{1+\lambda_i^2}V=\sum_{i=1}^n\frac{\partial \Sigma}{\partial \lambda_i}\\
    =\cos\psi \sum _{1\leq 2k+1\leq n}(-1)^k(n-2k)\sigma_{2k}-\sin\psi\sum_{0\leq2k\leq n}(-1)^k(n-2k+1)\sigma_{2k-1}\\
    =c_0(x)+c_{1}(x)\sigma_1+...+c_{n-1}(x)\sigma_{n-1}
\end{align*}
where the variable coefficient $c_i$ now depends on $i,n,\psi$ for all $1\leq i\leq n-1$.
Hence, (\ref{b12}) becomes
\begin{align*}
    \int_{B_2}b\sum_{i=1}^{n}\frac{1}{1+\lambda_i^2}\sqrt{\det g}dx\leq C(n,\psi,\delta)\int_{B_2}b(c_0(x)+c_1(x)\sigma_1+...+c_{n-1}(x)\sigma_{n-1})dx.
    \end{align*}\\

\item[Step 4.] We next estimate the integrals $\int b\sigma_k dx$ for $1\leq k\leq n-1$ inductively, using the divergence structure of $\sigma_k(D^2u)$. Let $L_{\sigma_k}$ denote the matrix $(\frac{\partial \sigma_k}{\partial u_{ij}})$, then we see that 
\begin{align*}
   c_k k\sigma_k(D^2u)=c_k\sum_{i,j=1}^n\frac{\partial \sigma_k}{\partial u_{ij}}\frac{\partial^ 2u}{\partial x_i \partial x_j}=c_k\sum_{i,j=1}^n\frac{\partial}{\partial x_i}\left[ \frac{\partial \sigma_k}{\partial u_{ij}}\frac{\partial u}{\partial x_j}\right]\\
   =c_kdiv(L_{\sigma_k}Du)=div(c_kL_{\sigma_k}Du)-Dc_k\cdot L_{\sigma_k}Du.
\end{align*}
Let $v$ be a smooth cut-off function on $B_{r+1}$ such that $v=1$ on $B_r$, $0\leq v\leq 1$ and $|Dv|<2$. Note that by Lemma \ref{y1}, we have $\sigma_k>0$ and from (\ref{b}) we have $b>\ln\sqrt{4/3}$, which together imply $c_k>0$. So we have
\begin{align}
    \int_{B_r}c_kb\sigma_kdx \leq \int_{B_{r+1}}c_kvb\sigma_k dx \nonumber\\
    =\int_{B_{r+1}}vb\frac{1}{k} [div(c_kL_{\sigma_k}Du)-Dc_k\cdot L_{\sigma_k}Du]dx \nonumber\\
    =\frac{1}{k}\int_{B_{r+1}}[-\langle bDv+vDb, c_k L_{\sigma_k}Du\rangle -vbDc_k\cdot L_{\sigma_k}Du]dx \nonumber \\
    \leq C(n,\psi,\phi)||Du||_{L^{\infty}(B_{r+1})}\{ \int_{B_{r+1}} b\sigma_{k-1}dx+\nonumber \\
    \int_{B_{r+1}}(|\nabla_gb|^2+tr(g^{ij}))\sqrt{\det g}dx \} \label{imp}
\end{align}
where the last inequality follows from the argument used in \cite[(3.6)]{WaY}.\\
We use the integral Jacobi inequality (\ref{IJ}) to simplify the last integral. 
Combining (\ref{ii}) with (\ref{pp2}) we have 
\[\int_{B_2}\phi^2|\nabla_gb|^2dv_g\leq C(n,\psi,\delta)\int_{B_{r+2}}tr(g^{ij})\sqrt{\det g}dx.\]
On rearranging constants, (\ref{imp}) reduces to the following inductive inequality 
\[\int_{B_r}c_kb\sigma_kdx \leq C(n,\psi,\delta)||Du||_{L^{\infty}(B_{r+1})}\{ \int_{B_{r+1}} b\sigma_{k-1}dx+\int_{B_{r+2}}tr(g^{ij})\sqrt{\det g}dx\}.
\]

Now applying the argument used in \cite[(3.7)]{WaY} and the trace-conformality identity we conclude that 
\[b(0)\leq C(n,\psi,\delta)\left[ ||Du||_{L^{\infty}(B_{2n+1})}+||Du||^{2n-2}_{L^{\infty}(B_{2n+1})}\right].
\]
On exponentiating we get 
\[ |D^2 u(0)|\leq C_1\exp[C_2||Du||^{2n-2}_{L^{\infty}(B_{2n+1})}]
\] 
where $C_1$ and $C_2$ are positive constants depending on $||\psi||_{C^{1,1}}$, $n$, and $\delta$.

\end{itemize}

Next, we consider the case $n=2$: we may fix  $\arctan\lambda_3=\pi/2-\delta/2$ and add $(\pi/2-\delta/2)$ to both sides of the two dimensional supercritical equation (\ref{s}) to get:
\[\arctan\lambda_1+\arctan\lambda_2+\arctan\lambda_3=\psi(x)+\frac{\pi}{2}-\frac{\delta}{2}\geq\frac{\pi}{2}+\frac{\delta}{2}.
\]
This again brings us to the three dimensional supercritical equation (\ref{s}) for which Hessian estimates hold good by the above proof.

This completes the proof of Theorem \ref{main1}.

\end{proof}
\textbf{Acknowledgments.} The author is grateful to Yu Yuan for his guidance, support, and many useful discussions. The author is grateful to Ravi Shankar and Micah Warren for helpful comments.

\bibliographystyle{unsrt}
\bibliography{name}

\end{document}